\documentclass[a4paper,12pt]{article}
\usepackage{a4wide}
\usepackage{amsmath}
\usepackage{amssymb}
\usepackage{amsthm}
\usepackage{latexsym}
\usepackage{graphicx}
\usepackage[english]{babel}
\usepackage{makeidx}
\usepackage[mathlines]{lineno}

\newtheorem{obs} [subsection]{Remark}
\newtheorem{exm} [subsection]{Example}

\newtheorem{conj}[subsection]{Conjecture}
\newtheorem{teor}[subsection]{Theorem}
\newtheorem{lema}[subsection]{Lemma}
\newtheorem{cor} [subsection]{Corollary}
\newcommand{\Zng}{$\mathbb Z^n$-graded $S$-module}

\def\sdepth{\operatorname{sdepth}}
\def\depth{\operatorname{depth}}
\def\supp{\operatorname{supp}}
\def\deg{\operatorname{deg}}

\def\Ass{\operatorname{Ass}}

\begin{document}
\selectlanguage{english}
\frenchspacing

\numberwithin{equation}{section}

\title{Depth and Stanley depth of powers of the path ideal of a cycle graph}
\author{Silviu B\u al\u anescu$^1$ and Mircea Cimpoea\c s$^2$}
\date{}

\maketitle

\footnotetext[1]{ \emph{Silviu B\u al\u anescu}, University Politehnica of Bucharest, Faculty of
Applied Sciences, 
Bucharest, 060042, E-mail: silviu.balanescu@stud.fsa.upb.ro}
\footnotetext[2]{ \emph{Mircea Cimpoea\c s}, University Politehnica of Bucharest, Faculty of
Applied Sciences, 
Bucharest, 060042, Romania and Simion Stoilow Institute of Mathematics, Research unit 5, P.O.Box 1-764,
Bucharest 014700, Romania, E-mail: mircea.cimpoeas@upb.ro,\;mircea.cimpoeas@imar.ro}

\begin{abstract}
Let $J_{n,m}:=(x_1x_2\cdots x_m,\;  x_2x_3\cdots x_{m+1},\; \ldots,\; x_{n-m+1}\cdots x_n,\; x_{n-m+2}\cdots x_nx_1, \linebreak
\ldots, x_nx_1\cdots x_{m-1})$ be the $m$-path ideal of the cycle graph of length $n$, in the ring $S=K[x_1,\ldots,x_n]$.

Let $d=\gcd(n,m)$. We prove that $\depth(S/J_{n,m}^t)\leq d-1$ for all $t\geq n-1$.
We show that $\sdepth(S/J_{n,n-1}^t)=\depth(S/J_{n,n-1}^t)=\max\{n-t-1,0\}$ for all $t\geq 1$.
Also, we give some bounds for $\depth(S/J_{n,m}^t)$ and $\sdepth(S/J_{n,m}^t)$, where $t\geq 1$.

\noindent \textbf{Keywords:} Stanley depth, depth, monomial ideal, cycle graph.

\noindent \textbf{2020 MSC:} 13C15, 13P10, 13F20.
\end{abstract}

\section*{Introduction}


Let $K$ be a field and $S=K[x_1,\ldots,x_n]$ the polynomial ring over $K$. The study of the edge ideals associated to graphs is a classical topic in 
combinatorial commutative algebra.
Conca and De Negri generalized the definition of an edge ideal and first introduced the 
notion of a $m$-path ideal in \cite{conca}. In the recent years, several algebraic and combinatorial 
properties of path ideals have been studied extensively. However, little is known about the powers
of $m$-path ideals. 

Following our previous work \cite{lucrare1}, the aim of our paper is to investigate 
the $\depth$ and the Stanley depth ($\sdepth$) of the quotient rings associated to powers of the $m$-path ideal of a cycle.
For the definition of the $\sdepth$ invariant see Section $2$.

For $n\geq m\geq 1$, the \emph{$m$-path ideal of the path graph} of length $n$ is
$$I_{n,m}=(x_1x_2\cdots x_m,\;  x_2x_3\cdots x_{m+1},\; \ldots,\; x_{n-m+1}\cdots x_n)\subset S.$$
The \emph{$m$-path ideal of the cycle graph} of length $n$ is
$$J_{n,m}=I_{n,m}+(x_{n-m+2}\cdots x_nx_1,x_{n-m+3}\cdots x_nx_1x_2,\ldots,x_nx_1\cdots x_{m-1}).$$

 In \cite{lucrare1} we proved that
 $$\depth(S/I_{n,m}^t) =\varphi(n,m,t):= \begin{cases} n -t+2 -
 \left\lfloor \frac{n-t+2}{m+1} \right\rfloor - \left\lceil \frac{n-t+2}{m+1} \right\rceil, & t \leq n+1-m
 \\ m-1,& t > n+1-m \end{cases}.$$
 Also, we prove that $\sdepth(S/I_{n,m}^t) \geq \depth(S/I_{n,m}^t)$ and $\sdepth(I_{n,m}^t)\geq \depth(I_{n,m}^t)$.

The scope of our paper is to obtain similar results for powers of the ideal $J_{n,m}$. Let $n>m\geq 2$ and $t\geq 1$.
For $m=2$, Mihn, Trung and Vu \cite{mtv} prove that
$$\depth(S/J_{n,2}^t)=\left\lceil \frac{n-t+1}{3} \right\rceil\text{ for all }2\leq t < \left\lceil \frac{n+1}{2} \right\rceil.$$
Let $d=\gcd(n,m)$ and let $t_0\leq n-1$ be maximal with the property that there exists an integer $\alpha$ such that 
$mt_0 = \alpha n + d$. In Theorem \ref{t2} we prove that if $d=1$ then 
$$\sdepth(S/J_{n,m}^t) = \depth(S/J_{n,m}^t) = 0 \text{ for all }t\geq t_0.$$
Also, we prove that if $d>1$ then 
$$\depth(S/J_{n,m}^t) \leq d-1 \text{ and } \sdepth(S/J_{n,m}^t) \leq n - \frac{n}{d} \text{ for all }t\geq t_0.$$
In Corollary \ref{t4} we prove that if $n$ is odd, then
$$\sdepth(S/J_{n,n-2}^t)=\depth(S/J_{n,n-2}^t)=0\text{ for all }t\geq \frac{n-1}{2}.$$
Also, we prove that if $n$ is even, then
$$\depth(S/J_{n,n-2}^t)\leq 1 \text{ and }\sdepth(S/J_{n,n-2}^t) \leq \frac{n}{2}\text{ for all }t\geq n-1.$$
In Theorem \ref{t3}, we prove that $$\depth(S/J_{n,m}^t)\leq \varphi(n-1,m,t)+1.$$
In Theorem \ref{t1} we prove that
$$\sdepth(S/J_{n,n-1}^t)=\depth(S/J_{n,n-1}^t)=\begin{cases} n -t -1, & t \leq n-1 \\ 0,& t \geq n \end{cases}.$$
In Theorem \ref{t5}, we show that if $n=mt-1$ then 
$$\sdepth(S/J_{n,m}^s)=\depth(S/J_{n,m}^s)=0\text{ for all }s\geq t.$$
Also, for $n\geq mt$, we prove that $$\sdepth(S/J_{n,m}^t),\depth(S/J_{n,m}^t)\geq \varphi(n-1,m,t).$$

\section{Preliminaries}

First, we recall the well known Depth Lemma, see for instance \cite[Lemma 2.3.9]{real}. 

\begin{lema}\label{l11}(Depth Lemma)
If $0 \rightarrow U \rightarrow M \rightarrow N \rightarrow 0$ is a short exact sequence of modules over a local ring $S$, or a Noetherian graded ring with $S_0$ local, then
\begin{enumerate}
\item[(1)] $\depth M \geq \min\{\depth N,\depth U\}$.
\item[(2)] $\depth U \geq \min\{\depth M,\depth N +1\}$.
\item[(3)] $\depth N \geq \min\{\depth U-1,\depth M\}$.
\end{enumerate}
\end{lema}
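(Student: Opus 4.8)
The plan is to derive all three inequalities uniformly from a single long exact sequence, using the homological characterization of depth. For a finitely generated module $L$ over $(S,\me)$ --- local, or graded with $S_0$ local and $\me$ the homogeneous maximal ideal --- one has
$$\depth L = \min\{i : \operatorname{Ext}^i_S(K,L) \neq 0\},$$
where $K=S/\me$ is the residue field (with the convention that the minimum of the empty set is $+\infty$, covering $L=0$). Equivalently one could use local cohomology, $\depth L = \min\{i : H^i_{\me}(L)\neq 0\}$; either functor turns the given short exact sequence into a long exact sequence, which is the engine of the whole argument.

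First I would apply the covariant long exact sequence of $\operatorname{Ext}_S(K,-)$ to $0 \to U \to M \to N \to 0$, obtaining
$$\cdots \to \operatorname{Ext}^i(K,U) \to \operatorname{Ext}^i(K,M) \to \operatorname{Ext}^i(K,N) \to \operatorname{Ext}^{i+1}(K,U) \to \cdots$$
Writing $a=\depth U$, $b=\depth M$, $c=\depth N$, we have $\operatorname{Ext}^i(K,U)=0$ for $i<a$, and likewise for $M,N$ with the thresholds $b,c$. Each part of the lemma is then read off from a suitable three-term window of this sequence.

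For (1), I would fix $i<\min\{a,c\}$; then both $\operatorname{Ext}^i(K,U)$ and $\operatorname{Ext}^i(K,N)$ vanish, so the middle term $\operatorname{Ext}^i(K,M)$ is squeezed to zero, giving $b\geq\min\{a,c\}$. For (2), I would use the window $\operatorname{Ext}^{i-1}(K,N) \to \operatorname{Ext}^i(K,U) \to \operatorname{Ext}^i(K,M)$: if $i<\min\{b,c+1\}$, then $i<b$ kills the right term and $i-1<c$ kills the left term, forcing $\operatorname{Ext}^i(K,U)=0$ and hence $a\geq\min\{b,c+1\}$. For (3), the analogous window $\operatorname{Ext}^i(K,M) \to \operatorname{Ext}^i(K,N) \to \operatorname{Ext}^{i+1}(K,U)$ shows that $i<\min\{a-1,b\}$ forces both flanking terms to vanish, whence $c\geq\min\{a-1,b\}$.

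The argument is essentially formal, so the only genuine care needed is bookkeeping: tracking the degree shift by $\pm 1$ as the connecting homomorphism moves between the three modules, which is exactly what produces the asymmetric ``$+1$'' in (2) and ``$-1$'' in (3). Since this is a standard result, in practice I would simply cite \cite[Lemma 2.3.9]{real}; but the sketch above recovers all three inequalities from one long exact sequence and a single vanishing principle.
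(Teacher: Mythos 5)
Your proof is correct: the Ext characterization of depth plus the three-term windows of the long exact sequence yields exactly the three inequalities, with the $\pm 1$ shifts coming from the connecting homomorphism as you say. Note that the paper itself offers no proof at all — it simply recalls the result with a citation to \cite[Lemma 2.3.9]{real} — and your argument is precisely the standard one underlying that reference, so there is nothing to compare beyond observing that you have filled in the omitted details (including the implicit hypothesis that the modules are finitely generated, which is needed for the characterization $\depth L = \min\{i : \operatorname{Ext}^i_S(K,L)\neq 0\}$).
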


Let $M$ be a \Zng. A \emph{Stanley decomposition} of $M$ is a direct sum $\mathcal D: M = \bigoplus_{i=1}^r m_i K[Z_i]$ as a $\mathbb Z^n$-graded $K$-vector space, where $m_i\in M$ is homogeneous with respect to $\mathbb Z^n$-grading, $Z_i\subset\{x_1,\ldots,x_n\}$ such that $m_i K[Z_i] = \{um_i:\; u\in K[Z_i] \}\subset M$ is a free $K[Z_i]$-submodule of $M$. We define $\sdepth(\mathcal D)=\min_{i=1,\ldots,r} |Z_i|$ and $\sdepth(M)=\max\{\sdepth(\mathcal D)|\;\mathcal D$ is a Stanley decomposition of $M\}$. The number $\sdepth(M)$ is called the \emph{Stanley depth} of $M$. 

Herzog, Vladoiu and Zheng show in \cite{hvz} that $\sdepth(M)$ can be computed in a finite number of steps if $M=I/J$, where $J\subset I\subset S$ are monomial ideals. In \cite{rin}, Rinaldo give a computer implementation for this algorithm, in the computer algebra system $\mathtt{CoCoA}$ \cite{cocoa}. We say that a \Zng $\;M$ satisfies the Stanley inequality, if 
$$\sdepth(M)\geq \depth(M).$$
In \cite{apel}, J.\ Apel restated a conjecture firstly given by Stanley in \cite{stan}, namely that any \Zng $\;M$ satisfies the Stanley
inequality. This conjecture proves to be false, in general, for $M=S/I$ and $M=J/I$, where $0\neq I\subset J\subset S$ are monomial ideals, see \cite{duval}, but remains open for $M=I$. 

The explicit computation of the Stanley depth it is a difficult task, even in very particular cases, and it is interesting in itself. 
Also,  although the Stanley conjecture was disproved in the most general set up, it is interesting to find large classes of ideals which satisfy the Stanley inequality. For a friendly introduction in the thematic of Stanley depth, we refer the reader \cite{her}.

In \cite{asia}, Asia Rauf proved the analog of Lemma \ref{l11} for $\sdepth$:

\begin{lema}\label{asia}
If $0 \rightarrow U \rightarrow M \rightarrow N \rightarrow 0$ is a short exact sequence of $\mathbb Z^n$-graded $S$-modules, then
$\sdepth(M) \geq \min\{\sdepth(U),\sdepth(N) \}$.
\end{lema}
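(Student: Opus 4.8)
The plan is to prove the inequality by explicitly lifting a Stanley decomposition of $N$ along the surjection and combining it with one of $U$. First I would identify $U$ with its image in $M$, so that the sequence reads $0 \to U \to M \xrightarrow{\pi} N \to 0$ with $N = M/U$. I would then choose Stanley decompositions $\mathcal{D}_U : U = \bigoplus_i u_i K[Z_i]$ and $\mathcal{D}_N : N = \bigoplus_j \bar v_j K[W_j]$ realizing $\sdepth(U)$ and $\sdepth(N)$, that is, with $\min_i |Z_i| = \sdepth(U)$ and $\min_j |W_j| = \sdepth(N)$. Since $\pi$ is a morphism of $\mathbb Z^n$-graded modules, each homogeneous generator $\bar v_j$ admits a homogeneous preimage $v_j \in M$ with $\pi(v_j) = \bar v_j$, and these lifts are the crucial new ingredient.

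The candidate decomposition is then
$$\mathcal{D} : M = \bigoplus_i u_i K[Z_i] \;\oplus\; \bigoplus_j v_j K[W_j],$$
and the bulk of the argument is to verify that this is a genuine Stanley decomposition. I would check three things. First, each $v_j K[W_j]$ is a free $K[W_j]$-module: if $v_j w = 0$ for some $w \in K[W_j]$, applying $\pi$ yields $\bar v_j w = 0$, and freeness of $\bar v_j K[W_j]$ in $\mathcal{D}_N$ forces $w = 0$. Second, the sum is direct: in any relation $\sum_i u_i f_i + \sum_j v_j g_j = 0$, applying $\pi$ kills the $U$-part and leaves $\sum_j \bar v_j g_j = 0$, so directness of $\mathcal{D}_N$ gives all $g_j = 0$, whence $\sum_i u_i f_i = 0$ and directness of $\mathcal{D}_U$ gives all $f_i = 0$.

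Third, I would show the summands span $M$: given a homogeneous $m \in M$, I write $\pi(m) = \sum_j \bar v_j g_j$ using $\mathcal{D}_N$, observe that $m - \sum_j v_j g_j$ lies in $\ker \pi = U$, and expand it in $\mathcal{D}_U$, producing the required expression for $m$. Together these verifications show that $\mathcal{D}$ is a Stanley decomposition with $\sdepth(\mathcal{D}) = \min\{\min_i |Z_i|, \min_j |W_j|\} = \min\{\sdepth(U), \sdepth(N)\}$, and hence $\sdepth(M) \geq \min\{\sdepth(U), \sdepth(N)\}$. The main obstacle is the verification that the lifting preserves both freeness and directness; the only structural input needed is that the sequence is $\mathbb Z^n$-graded, which is exactly what guarantees the existence of the homogeneous lifts $v_j$ and ensures $\pi$ respects the grading throughout the argument.
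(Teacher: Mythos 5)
Your proof is correct: the lifting of a Stanley decomposition of $N$ along $\pi$, combined with one of $U$, does yield a Stanley decomposition of $M$, and all three verifications (freeness, directness, spanning) go through exactly as you describe. Note that the paper itself gives no proof of this lemma --- it is quoted from A.\ Rauf's paper \cite{asia} --- and your argument is essentially the standard one given there, so there is nothing further to compare.
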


We recall the following well known result (see for instance \cite[Lemma 2.3.10]{real}):

\begin{lema}\label{freg}
Let $M$ be a graded $S$-module and $f\in\mathfrak m=(x_1,\ldots,x_n)\subset S$ a homogeneous polynomial such that $f$ is regular on $M$.
Then $\depth(M/fM)=\depth(M)-1$.
\end{lema}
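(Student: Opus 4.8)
The plan is to reduce the statement to a short exact sequence and then exploit the standard cohomological characterization of depth. Since $f$ is homogeneous, say of degree $d=\deg f$, and regular on $M$, multiplication by $f$ is an injection $M(-d)\hookrightarrow M$ with cokernel $M/fM$, so I would start from the short exact sequence of graded modules
$$0\to M(-d)\xrightarrow{\;f\;} M\to M/fM\to 0,$$
recording that $\depth M(-d)=\depth M$ because a degree shift does not change depth, and that $\depth M\ge 1$ since $f$ is $M$-regular.

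For the inequality $\depth(M/fM)\ge \depth M-1$ I would simply apply part $(3)$ of the Depth Lemma (Lemma \ref{l11}) to this sequence with $U=M(-d)$ and $N=M/fM$, obtaining $\depth(M/fM)\ge\min\{\depth M-1,\depth M\}=\depth M-1$. This half is routine; the Depth Lemma alone, however, does not force the reverse inequality, so a finer tool is needed.

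The main obstacle is thus the equality, and for it I would use the characterization $\depth N=\min\{i:\operatorname{Ext}^i_S(K,N)\neq 0\}$ with $K=S/\mathfrak m$. Applying $\operatorname{Ext}^\bullet_S(K,-)$ to the short exact sequence yields a long exact sequence in which each map induced by $f\colon M(-d)\to M$ coincides with multiplication by $f$ on the relevant $\operatorname{Ext}$-module; since $f\in\mathfrak m$ annihilates $K$, it annihilates every $\operatorname{Ext}^i_S(K,M)$, so all these maps vanish. The key delicate point is exactly this vanishing, and once it is in place the long exact sequence breaks into
$$0\to \operatorname{Ext}^i_S(K,M)\to \operatorname{Ext}^i_S(K,M/fM)\to \operatorname{Ext}^{i+1}_S(K,M)(-d)\to 0.$$
Writing $g=\depth M$, the vanishing of $\operatorname{Ext}^i_S(K,M)$ for $i<g$ forces $\operatorname{Ext}^i_S(K,M/fM)=0$ for all $i<g-1$, while for $i=g-1$ the rightmost term is $\operatorname{Ext}^g_S(K,M)(-d)\neq 0$, whence $\operatorname{Ext}^{g-1}_S(K,M/fM)\neq 0$. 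This pins down $\depth(M/fM)=g-1=\depth M-1$, completing the argument.

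As an alternative to the Ext computation, one can argue purely via regular sequences using Rees' theorem: every maximal $M$-regular sequence in $\mathfrak m$ has length $\depth M$, and any such sequence beginning with $f$ is continued by a maximal $(M/fM)$-regular sequence of length $\depth M-1$. Either route works, and in both the only genuinely subtle step is controlling the behaviour of $f$ once the problem is pushed into homological or regular-sequence data.
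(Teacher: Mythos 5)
Your argument is correct, but there is nothing in the paper to compare it against: Lemma \ref{freg} is stated there without proof, recalled as a well-known fact with a citation to \cite[Lemma 2.3.10]{real}. What you have written is essentially the standard textbook proof that such a reference contains. The short exact sequence $0\to M(-d)\xrightarrow{\;f\;}M\to M/fM\to 0$ together with Lemma \ref{l11}(3) correctly gives $\depth(M/fM)\geq \depth M-1$, and your use of the characterization $\depth N=\min\{i:\operatorname{Ext}^i_S(K,N)\neq 0\}$ is sound: since the two $S$-actions on $\operatorname{Ext}^i_S(K,M)$ agree and $f$ annihilates $K$, the maps induced by $f$ in the long exact sequence vanish, the sequence splits into the short exact sequences you display, and the case $i=g-1$ yields $\operatorname{Ext}^{g-1}_S(K,M/fM)\cong\operatorname{Ext}^{g}_S(K,M)(-d)\neq 0$, pinning down the equality. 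The alternative you sketch via Rees' theorem on maximal regular sequences is equally valid and is arguably the shorter route. The only point worth flagging is a hypothesis the paper leaves implicit: $M$ should be finitely generated and nonzero (so that $M/fM\neq 0$ by graded Nakayama and the $\operatorname{Ext}$ characterization of depth applies); with that reading, your proof needs no changes.
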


We also recall the following well known results. See for instance \cite[Corollary 1.3]{asia}, \cite[Proposition 2.7]{mirci},
\cite[Theorem 1.1]{mir}, \cite[Lemma 3.6]{hvz} and \cite[Corollary 3.3]{asia}.

\begin{lema}\label{lem}
Let $I\subset S$ be a monomial ideal and let $u\in S$ a monomial such that $u\notin I$. Then
\begin{enumerate}
\item[(1)] $\sdepth(S/(I:u))\geq \sdepth(S/I)$.
\item[(2)] $\depth(S/(I:u))\geq \depth(S/I)$.
\end{enumerate}
\end{lema}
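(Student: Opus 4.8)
The plan is to prove the two inequalities by related but different routes, after recording one structural fact. Since $I$ is a monomial ideal and $u$ a monomial, $(I:u)$ is again a monomial ideal, and $u\notin I$ forces $1\notin(I:u)$, so $S/(I:u)\neq 0$. Multiplication by $u$ on $S/I$ has kernel $(I:u)/I$ and image $(I+(u))/I\cong S/(I:u)$, which yields the short exact sequence
\[
0\longrightarrow S/(I:u)\xrightarrow{\ \cdot u\ } S/I\longrightarrow S/(I+(u))\longrightarrow 0.
\]
For the $\depth$ statement I would moreover reduce to the case $u=x_j$: writing $u=x_{i_1}\cdots x_{i_k}$ and using $(I:vw)=((I:v):w)$, the hypothesis $u\notin I$ is preserved at each step (if the monomial we colon by lay in the current ideal, then $u\in I$), so it suffices to treat a single variable.

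For part (1) I would not use the sequence above but argue directly with Stanley decompositions. Fix an optimal decomposition $\mathcal D:\ S/I=\bigoplus_{i=1}^{r}v_iK[Z_i]$ with $\sdepth(\mathcal D)=\sdepth(S/I)$. The standard monomials of $S/(I:u)$ are exactly the monomials $w$ with $uw\notin I$, that is, those $w$ for which $uw$ is a standard monomial of $S/I$; since $w\mapsto uw$ is injective and each standard monomial of $S/I$ lies in a unique summand, the sets $T_i:=\{w:\ uw\in v_iK[Z_i]\}$ partition the standard monomials of $S/(I:u)$. The key computation is that each nonempty $T_i$ has the form $w_i^{*}K[Z_i]$ for an explicit monomial $w_i^{*}$, with $\deg_j(w_i^{*})=\deg_j(v_i)-\deg_j(u)$ for $x_j\notin Z_i$ (which also records the nonemptiness condition $\deg_j(v_i)\geq\deg_j(u)$) and $\deg_j(w_i^{*})=\max\{0,\deg_j(v_i)-\deg_j(u)\}$ for $x_j\in Z_i$; indeed the condition $uw\in v_iK[Z_i]$ reads coordinatewise as a system of equalities and inequalities on exponents whose solution set in $w$ is precisely $w_i^{*}K[Z_i]$. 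This produces a Stanley decomposition $S/(I:u)=\bigoplus_{T_i\neq\emptyset}w_i^{*}K[Z_i]$ (each a free $K[Z_i]$-module) built from the same sets $Z_i$, so $\sdepth(S/(I:u))\geq\min_i|Z_i|=\sdepth(S/I)$.

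For part (2) I would feed the displayed sequence into Lemma \ref{l11}(2), which gives $\depth(S/(I:u))\geq\min\{\depth(S/I),\,\depth(S/(I+(u)))+1\}$; hence it suffices to prove the deletion estimate $\depth(S/(I+(u)))\geq\depth(S/I)-1$, and by the reduction above only for $u=x_j$, where $S/(I+(x_j))=\bar S/\bar I$ with $\bar S=K[x_1,\ldots,\widehat{x_j},\ldots,x_n]$ and $\bar I=I\cap\bar S$. I expect this deletion estimate to be the main obstacle. The displayed sequence by itself cannot settle it, since every Depth Lemma manipulation turns out circular: the general-module assertion $\depth(uM)\geq\depth(M)$ specializes, for the cyclic module $M=S/I$, to exactly the inequality we are after. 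I would therefore prove the deletion estimate using the monomial structure rather than formal homological algebra---e.g. through Takayama's description of the $\mathbb Z^n$-graded local cohomology $H^{i}_{\mathfrak m}(S/I)$ by reduced simplicial cohomology of certain complexes, under which replacing $I$ by $I+(x_j)$ restricts those complexes without lowering the first nonvanishing cohomological degree---this being the content isolated in the cited references. As supporting structure I would record that the isomorphism $S/(I:u)\cong(I+(u))/I\subseteq S/I$ gives $\Ass(S/(I:u))\subseteq\Ass(S/I)$; this settles the case $\depth(S/I)=0$ at once and shows that every $S/I$-regular element is automatically $S/(I:u)$-regular, so that Lemma \ref{freg} reduces the general case to lower depth---though making this reduction respect monomiality is itself part of the difficulty I expect to confront.
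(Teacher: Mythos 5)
The paper never proves Lemma \ref{lem}: it is quoted as a known result, with \cite[Corollary 1.3]{asia} cited for the depth inequality and \cite[Proposition 2.7]{mirci}, \cite[Lemma 3.6]{hvz} for the Stanley depth inequality. So your attempt must be judged against those standard arguments. Your part (1) is correct, complete, and is essentially the argument of the cited sources: you pull back an optimal Stanley decomposition $S/I=\bigoplus_i v_iK[Z_i]$ along multiplication by $u$, and your coordinatewise analysis (equality of exponents for $x_j\notin Z_i$, truncated inequality for $x_j\in Z_i$) correctly shows that each nonempty $T_i$ equals $w_i^*K[Z_i]$, giving a Stanley decomposition of $S/(I:u)$ with the same sets $Z_i$. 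Nothing to object to there.

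Part (2), however, contains a genuine gap, one you in fact flag yourself. The exact sequence $0\to S/(I:u)\to S/I\to S/(I,u)\to 0$ together with Lemma \ref{l11}(2) reduces the claim to the deletion estimate $\depth(S/(I+(x_j)))\ge\depth(S/I)-1$; but Lemma \ref{l11}(3) applied to the same sequence derives that estimate back from the claim, so the two statements are equivalent and the reduction makes no progress --- exactly the circularity you describe. At that point the entire content of the lemma is delegated to an unproven assertion about Takayama's formula (``restricts those complexes without lowering the first nonvanishing cohomological degree''). That sentence is a statement of intent, not an argument: to make it one you would need to define the degree complexes $\Delta_a$, establish the precise relation between $\Delta_a(I:u)$ and $\Delta_{a+\deg u}(I)$, and control the shift $|\{j:\,a_j<0\}|$ occurring in Takayama's isomorphism, which changes when $a$ is replaced by $a+\deg u$; none of this is carried out. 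Your fallback route also does not close the argument, for the reason you concede: the case $\depth(S/I)=0$ follows from $\Ass(S/(I:u))\subseteq\Ass(S/I)$, but to induct on depth you would divide by an $S/I$-regular homogeneous element $f$, and then $I+(f)$ is no longer a monomial ideal and $((I+(f)):u)$ need not equal $(I:u)+(f)$, so there is no monomial statement of lower depth to recurse on. In short: part (1) stands, part (2) is unproven; the honest fix is either to carry out the $\mathbb Z^n$-graded local cohomology computation in full, or to do what the paper does and cite \cite[Corollary 1.3]{asia}.
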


\begin{lema}\label{lemm}
Let $I\subset S$ be a monomial ideal and let $u\in S$ a monomial such that $I=u(I:u)$. Then
  \begin{enumerate}
  \item[(1)] $\sdepth(S/(I:u))=\sdepth(S/I)$.
	\item[(2)] $\depth(S/(I:u))=\depth(S/I)$.
  \end{enumerate}
\end{lema}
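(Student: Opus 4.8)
The plan is to first translate the hypothesis into a usable form, then reduce both equalities to a single short exact sequence. Since $u(I:u)\subseteq I$ always holds, the content of $I=u(I:u)$ is the reverse inclusion $I\subseteq u(I:u)$, which for a monomial ideal means exactly that $u$ divides every minimal monomial generator of $I$; equivalently $I\subseteq uS$. I may assume $I\neq 0$ and $u\notin I$, since otherwise $S/(I:u)$ and $S/I$ are simultaneously $0$ or simultaneously $S$ and there is nothing to prove. Under these assumptions $(I:u)$ is a nonzero proper ideal and $I\subsetneq uS$.

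The key step is a graded isomorphism. Multiplication by $u$ induces $\varphi\colon S/(I:u)\to S/I$, $\bar f\mapsto\overline{uf}$; it is well defined and injective directly from the definition of the colon ideal, and its image is $(uS+I)/I=uS/I$, where the last equality uses $I\subseteq uS$. Thus $\varphi$ is a $\mathbb Z^n$-graded isomorphism (up to the shift by $\deg u$) $S/(I:u)\cong uS/I$, so $\depth(S/(I:u))=\depth(uS/I)$ and $\sdepth(S/(I:u))=\sdepth(uS/I)$. From $I\subseteq uS\subseteq S$ I obtain the short exact sequence
$$0\longrightarrow uS/I\longrightarrow S/I\longrightarrow S/uS\longrightarrow 0,$$
and I will play $S/I$ against its submodule $uS/I$ through the third term $S/uS$.

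Next I would record that $\depth(S/uS)=\sdepth(S/uS)=n-1$: since $u$ is a nonzerodivisor, $S/uS$ is a hypersurface, hence Cohen--Macaulay of dimension $n-1$, giving the depth, while for the Stanley depth one exhibits the explicit decomposition sorting the monomials not divisible by $u$ by the least index $i$ at which the $x_i$-exponent drops below its exponent in $u$ (each component then uses all variables but $x_i$). For the depth equality I would apply the Depth Lemma (Lemma \ref{l11}) to the sequence: since $\depth(S/I)\le\dim(S/I)\le n-1$ and $\depth(uS/I)=\depth(S/(I:u))\le n-1$, part (2) yields $\depth(uS/I)\ge\min\{\depth(S/I),n\}=\depth(S/I)$ while part (1) yields $\depth(S/I)\ge\min\{\depth(uS/I),n-1\}=\depth(uS/I)$, and the sandwich gives $\depth(S/(I:u))=\depth(S/I)$.

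For the Stanley depth no two-sided exact-sequence lemma is available, and this is the main obstacle: the argument must be one-sided. One inequality is free from Lemma \ref{lem}(1), namely $\sdepth(S/(I:u))\ge\sdepth(S/I)$. For the reverse I would apply Rauf's Lemma \ref{asia} to the short exact sequence, obtaining $\sdepth(S/I)\ge\min\{\sdepth(uS/I),\sdepth(S/uS)\}=\min\{\sdepth(S/(I:u)),n-1\}$, and since $\sdepth(S/(I:u))\le\dim(S/(I:u))\le n-1$ this minimum equals $\sdepth(S/(I:u))$. Combining the two inequalities gives $\sdepth(S/(I:u))=\sdepth(S/I)$. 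The only genuinely delicate points are the identification of the image of $\varphi$, where the hypothesis $I\subseteq uS$ is essential, and the computation $\sdepth(S/uS)=n-1$, which provides the large invariant that makes the one-sided Stanley-depth estimate sharp.
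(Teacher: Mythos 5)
Your proof is correct in substance, but note that the paper itself contains no proof of this lemma: it is recalled as a well-known result, the relevant citation being \cite[Proposition 2.7]{mirci}. So there is no internal proof to match, and your argument is a legitimate self-contained reconstruction. Its skeleton --- the observation that, for monomial ideals, $I=u(I:u)$ is equivalent to $I\subseteq uS$; the shifted $\mathbb Z^n$-graded isomorphism $S/(I:u)\cong uS/I$ given by multiplication by $u$; the short exact sequence $0\to uS/I\to S/I\to S/uS\to 0$; and the computation $\depth(S/uS)=\sdepth(S/uS)=n-1$ --- is exactly the right mechanism. The two one-sided estimates then close correctly: Lemma \ref{l11} applied in both directions gives the depth equality, and Lemma \ref{lem}(1) combined with Rauf's Lemma \ref{asia} gives the Stanley depth equality, where the needed bounds $\depth(S/(I:u))\le n-1$ and $\sdepth(S/(I:u))\le n-1$ hold because $(I:u)$ is a nonzero proper ideal (nonzero since $(I:u)\supseteq I\neq 0$, proper since $u\notin I$). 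This is arguably cleaner and more modular than the decomposition-splicing argument of the cited source, at the cost of invoking the exact-sequence machinery.

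One inaccuracy in your reduction to the non-degenerate case: if $u\in I$, then together with $I=u(I:u)$ this forces $I=uS$, whence $S/(I:u)=0$ while $S/I=S/uS\neq 0$; the two modules are \emph{not} simultaneously zero, and the asserted equalities genuinely fail there under the convention $\depth(0)=\infty$. The correct statement is that the hypothesis $u\notin I$ must be regarded as implicitly part of the lemma (exactly as it is explicit in Lemma \ref{lem}), i.e.\ the degenerate case is excluded, not harmless. This slip concerns only the framing; once $I\neq 0$ and $u\notin I$ are assumed, your argument is complete and correct.
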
	
	
	
\begin{lema}\label{lhvz}
Let $I\subset S$ be a monomial ideal and $S'=S[x_{n+1}]$. Then
\begin{enumerate}
\item[(1)] $\sdepth_{S'}(S'/IS')=\sdepth_S(S/I)+1$, 
\item[(2)] $\depth_{S'}(S'/IS')=\depth_S(S/I)+1$.
\end{enumerate}
\end{lema}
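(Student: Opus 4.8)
The plan is to handle the two statements by slightly different mechanisms: the depth equality through a regular element, and the Stanley depth equality through explicit extension and restriction of Stanley decompositions.

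For part (2), I would first record the $\mathbb Z^{n+1}$-graded isomorphism $S'/IS'\cong (S/I)[x_{n+1}]$, which follows from the observation that a monomial $ux_{n+1}^k$, with $u$ a monomial in $x_1,\dots,x_n$, lies in $IS'$ if and only if $u\in I$. In particular multiplication by $x_{n+1}$ is injective on $S'/IS'$, so $x_{n+1}$ is a regular element. Applying Lemma \ref{freg} with $f=x_{n+1}$ gives
$$\depth_{S'}\bigl((S'/IS')/x_{n+1}(S'/IS')\bigr)=\depth_{S'}(S'/IS')-1.$$
Now $(S'/IS')/x_{n+1}(S'/IS')=S'/(IS'+x_{n+1}S')\cong S/I$ as graded modules, and a module annihilated by $x_{n+1}$ has the same depth over $S'$ as over $S$: any maximal $S/I$-regular sequence taken in $(x_1,\dots,x_{n+1})$ may be replaced by one inside $(x_1,\dots,x_n)$, since $x_{n+1}$ acts as zero. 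Hence the left-hand side equals $\depth_S(S/I)$, and rearranging proves (2).

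For part (1), the lower bound $\sdepth_{S'}(S'/IS')\ge \sdepth_S(S/I)+1$ comes from extending decompositions. Given a Stanley decomposition $S/I=\bigoplus_i m_iK[Z_i]$ with $\min_i|Z_i|=\sdepth_S(S/I)$, the same partition of the monomials of $S\setminus I$ shows, via the description of $S'/IS'$ above, that $S'/IS'=\bigoplus_i m_iK[Z_i\cup\{x_{n+1}\}]$ is a Stanley decomposition, each summand of which has exactly one more free variable. For the reverse inequality I would restrict a Stanley decomposition $S'/IS'=\bigoplus_j u_jK[Z_j]$ to $x_1,\dots,x_n$: discard every summand with $x_{n+1}\mid u_j$, since it contains no monomial of $S$, and from each surviving summand delete $x_{n+1}$ from $Z_j$. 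Verifying that every monomial of $S\setminus I$ is then caught exactly once confirms that this produces a Stanley decomposition of $S/I$ whose minimal block size drops by at most one, so that $\sdepth_S(S/I)\ge \sdepth_{S'}(S'/IS')-1$. Combining the two bounds yields (1).

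The depth bookkeeping and the direct-sum verifications are routine; the one place that demands genuine care is confirming that the restriction in part (1) really partitions the monomials of $S\setminus I$, with no monomial lost or double-counted. This is exactly where the hypothesis that $x_{n+1}$ is a genuinely new variable is used, namely that no minimal generator of $IS'$ involves $x_{n+1}$, so that a monomial of $S$ can only be caught by a summand with $x_{n+1}\nmid u_j$.
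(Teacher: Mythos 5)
Your proof is correct. Note, however, that the paper itself does not prove this lemma: it is recalled as a well-known result, with citations to Herzog--Vladoiu--Zheng \cite[Lemma 3.6]{hvz} for the Stanley depth part and to standard references for the depth part, so there is no in-paper argument to compare against. Your two arguments are precisely the standard ones from that literature: for (2), the observation that $S'/IS'\cong (S/I)[x_{n+1}]$ makes $x_{n+1}$ a regular element, so Lemma \ref{freg} applies, together with the base-change fact that a module killed by $x_{n+1}$ has the same depth over $S'$ as over $S$; for (1), extension of a Stanley decomposition by adjoining $x_{n+1}$ to every $Z_i$ (giving $\geq$), and restriction of a decomposition of $S'/IS'$ by discarding summands with $x_{n+1}\mid u_j$ and deleting $x_{n+1}$ from the surviving $Z_j$ (giving $\leq$). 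The restriction step is the only delicate point and your justification is the right one: if a monomial $u$ of $S\setminus I$ lies in $u_jK[Z_j]$, then $u_j\mid u$ forces $x_{n+1}\nmid u_j$, and $u/u_j\in K[Z_j\setminus\{x_{n+1}\}]$, so the surviving summands still partition the monomials of $S\setminus I$; this uses $IS'\cap S=I$, which holds exactly because the generators of $IS'$ do not involve $x_{n+1}$. So your write-up is a legitimate self-contained substitute for the citation.
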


\begin{lema}\label{lem7}
Let $I\subset S$ be a monomial ideal. Then the following assertions are equivalent:
\begin{enumerate}
\item[(1)] $\mathfrak m=(x_1,\ldots,x_n)\in \Ass(S/I)$.
\item[(2)] $\depth(S/I)=0$.
\item[(3)] $\sdepth(S/I)=0$.
\end{enumerate}
\end{lema}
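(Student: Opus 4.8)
\emph{Strategy.} The plan is to prove $(1)\Leftrightarrow(2)$ by classical commutative algebra and then to close the loop through the two Stanley-depth implications $(1)\Rightarrow(3)$ and $(3)\Rightarrow(1)$, the last being the only delicate point. For $(1)\Leftrightarrow(2)$: by definition $\depth(S/I)=0$ exactly when $\mathfrak m$ contains no $S/I$-regular element, i.e.\ every element of $\mathfrak m$ is a zerodivisor on $S/I$. Since the zerodivisors form $\bigcup_{\mathfrak p\in\Ass(S/I)}\mathfrak p$, this says $\mathfrak m\subseteq\bigcup_{\mathfrak p\in\Ass(S/I)}\mathfrak p$; prime avoidance forces $\mathfrak m\subseteq\mathfrak p$ for a single $\mathfrak p\in\Ass(S/I)$, and maximality of $\mathfrak m$ gives $\mathfrak m=\mathfrak p\in\Ass(S/I)$. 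The reverse implication is immediate. Because $I$ is monomial it is convenient to record the equivalent combinatorial form of $(1)$: there is a monomial $u\notin I$ with $(I:u)=\mathfrak m$, i.e.\ $x_ju\in I$ for every $j$.

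For $(1)\Rightarrow(3)$ I would fix such a monomial $u$ and let $\mathcal D:\;S/I=\bigoplus_{i} m_iK[Z_i]$ be an arbitrary Stanley decomposition (since each $\mathbb Z^n$-homogeneous element of $S/I$ is a scalar times a single monomial, we may take every $m_i$ to be a monomial). The standard monomial $u$ lies in a unique summand, so $u=m_kw$ for a monomial $w\in K[Z_k]$. If some $x_j\in Z_k$, then $wx_j\in K[Z_k]$ and $ux_j=m_k(wx_j)$ is a nonzero element of $m_kK[Z_k]\subseteq S/I$, contradicting $x_ju\in I$. Hence $Z_k=\emptyset$, so $\sdepth(\mathcal D)=0$; as $\mathcal D$ was arbitrary, $\sdepth(S/I)=0$.

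For $(3)\Rightarrow(1)$ I would argue the contrapositive, $\mathfrak m\notin\Ass(S/I)\Rightarrow\sdepth(S/I)\geq1$, by induction. If some variable, say $x_n$, is regular on $S/I$, then no minimal generator of $I$ involves $x_n$, so $I=I_0S$ with $I_0\subset K[x_1,\ldots,x_{n-1}]$, and Lemma \ref{lhvz} gives $\sdepth(S/I)=\sdepth(K[x_1,\ldots,x_{n-1}]/I_0)+1\geq1$. Otherwise $x_n$ is a zerodivisor and $(I:x_n)\supsetneq I$. Applying Lemma \ref{asia} to the sequence $0\to S/(I:x_n)\xrightarrow{\;\cdot x_n\;}S/I\to S/(I+(x_n))\to0$ yields $\sdepth(S/I)\geq\min\{\sdepth(S/(I:x_n)),\sdepth(S/(I+(x_n)))\}$, so it suffices to bound both terms below by $1$. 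A short monomial computation shows $\mathfrak m\notin\Ass(S/I)$ is inherited by both subquotients: if $\mathfrak m\in\Ass(S/(I:x_n))$ were witnessed by $w$, then $x_nw$ would witness $\mathfrak m\in\Ass(S/I)$; and if $(x_1,\ldots,x_{n-1})\in\Ass(K[x_1,\ldots,x_{n-1}]/\overline I)$ were witnessed by $v$, then $vx_n^{\,c}$ with $c$ maximal subject to $vx_n^{\,c}\notin I$ would witness $\mathfrak m\in\Ass(S/I)$. Thus the induction hypothesis applies to each term and gives $\sdepth\geq1$.

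\emph{Main obstacle.} The essential difficulty is making this induction well founded: the factor $S/(I+(x_n))$ lives in fewer variables, but $S/(I:x_n)$ lives in the same ring $S$, so $n$ alone cannot be the induction parameter. I would fix a box $g=x_1^{a_1}\cdots x_n^{a_n}$ dominating the exponents of the generators of $I$ and induct on the pair $\bigl(n,\#\{\text{standard monomials of }S/I\text{ dividing }g\}\bigr)$ in lexicographic order, the strict inclusion $(I:x_n)\supsetneq I$ forcing the second coordinate to drop. Alternatively one can bypass the induction by invoking the Herzog--Vladoiu--Zheng description \cite{hvz} of $\sdepth(S/I)$ via interval partitions of the characteristic poset $P_I^g$: the hypothesis $\mathfrak m\notin\Ass(S/I)$ forces every maximal element $\mathbf d$ of $P_I^g$ to satisfy $d_i=a_i$ for some $i$ (a free variable), and the task becomes constructing a partition all of whose interval tops are such maximal elements. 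Either way, the crux is this combinatorial bookkeeping ensuring no interval is left with its top in the interior of the box.
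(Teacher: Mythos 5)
Your equivalence $(1)\Leftrightarrow(2)$ (prime avoidance) and your implication $(1)\Rightarrow(3)$ (locating the witness monomial $u$ in its unique Stanley summand and concluding $Z_k=\emptyset$) are correct; these are the standard easy parts. Note that the paper itself offers no proof of this lemma to compare against --- it is recalled as a known result with citations to the literature --- so your attempt stands or falls on its own.

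It falls on $(3)\Rightarrow(1)$, and at a more basic point than the well-foundedness issue you flagged. In your Case B you claim that $\mathfrak m\notin\Ass(S/I)$ is inherited by $S/(I+(x_n))$, arguing that a witness $v$ for $(x_1,\ldots,x_{n-1})\in\Ass(K[x_1,\ldots,x_{n-1}]/\overline I)$ yields a witness $vx_n^c$ for $\mathfrak m\in\Ass(S/I)$, where $c$ is maximal with $vx_n^c\notin I$. Such a maximal $c$ need not exist: it can happen that $vx_n^c\notin I$ for every $c\geq 0$, in which case one only gets $(x_1,\ldots,x_{n-1})\in\Ass(S/I)$, which is no contradiction. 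Concretely, take $I=(x_1^2,\,x_1x_2,\,x_2^2x_3)\subset K[x_1,x_2,x_3]$. Every variable is a zerodivisor on $S/I$ (so your Case B applies with splitting variable $x_3$), and $\mathfrak m\notin\Ass(S/I)$: the standard monomials are $x_1x_3^c$, $x_3^c$, $x_2x_3^c$ and $x_2^b$ ($b\geq 2$), and each of them admits a variable multiplication staying outside $I$. Yet $\overline I=(x_1^2,x_1x_2)$ satisfies $(x_1,x_2)\in\Ass(K[x_1,x_2]/\overline I)$, witnessed by $v=x_1$, and indeed $x_1x_3^c\notin I$ for all $c$. By your own implication $(1)\Rightarrow(3)$, this forces $\sdepth(S/(I,x_3))=0$, so Rauf's Lemma \ref{asia} applied to $0\to S/(I:x_3)\to S/I\to S/(I,x_3)\to 0$ gives only $\sdepth(S/I)\geq 0$, and the induction cannot close. (The lemma itself is fine here: $S/I=K[x_3]\oplus x_1K[x_3]\oplus x_2K[x_3]\oplus x_2^2K[x_2]$ shows $\sdepth(S/I)\geq 1$; it is your reduction that breaks.) In this example a different splitting variable ($x_1$ or $x_2$) would have worked, but you give no argument that a good variable always exists, and fixing that would be a genuinely new lemma. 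Your fallback sketch via the Herzog--Vladoiu--Zheng poset has the same status: identifying that every maximal element of $P_I^g$ touches the boundary of the box is easy, but ``constructing a partition all of whose interval tops are such maximal elements'' is precisely the nontrivial content of $(3)\Rightarrow(1)$, not bookkeeping --- which is presumably why the paper simply cites the literature for this equivalence rather than proving it.
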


Let $2\leq m < n$ be two integers. We consider the ideal
$$I_{n,m}=(x_1\cdots x_m,\; x_2\cdots x_{m+1},\; \ldots,\; x_{n-m+1}\cdots x_n)\subset S.$$
We denote
$\varphi(n,m,t):= \begin{cases} n -t+2 -
 \left\lfloor \frac{n-t+2}{m+1} \right\rfloor - \left\lceil \frac{n-t+2}{m+1} \right\rceil, & t \leq n+1-m
 \\ m-1,& t > n+1-m \end{cases}.$

We recall the main result of \cite{lucrare1}:

\begin{teor}(See \cite[Theorem 2.6]{lucrare1})\label{depth}
With the above notation, we have that
\begin{enumerate}
\item[(1)] $\sdepth(S/I_{n,m}^t)\geq \depth(S/I_{n,m}^t)=\varphi(n,m,t),\text{ for any }1\leq m\leq n\text{ and }t\geq 1$.
\item[(2)] $\sdepth(S/I_{n,m}^t)\leq \sdepth(S/I_{n,m})=\varphi(n,m,1)$.
\end{enumerate}
\end{teor}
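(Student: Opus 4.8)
The plan is to prove the three assertions by a joint induction on the number of variables $n$ and the power $t$, the combinatorial backbone being the single-step recursion $\varphi(n,m,t)=\varphi(n-1,m,t)+\delta$ with $\delta\in\{0,1\}$ governed by the residue of $n-t$ modulo $m+1$ (equivalently, the block identity $\varphi(n,m,t)=\varphi(n-m-1,m,t)+(m-1)$). The equality $\depth(S/I_{n,m}^t)=\varphi(n,m,t)$ splits into a lower bound, obtained from the Depth Lemma applied to a short exact sequence, and an upper bound, which I would get either from the same sequence (the Depth Lemma pins the depth to the minimum of the two outer depths outside the boundary case discussed below) or, as a fallback, by exhibiting a prime $\mathfrak p\in\Ass(S/I_{n,m}^t)$ with $\dim S/\mathfrak p=\varphi(n,m,t)$; the latter connects to Lemma \ref{lem7} and Lemma \ref{freg} by killing a regular sequence of length $\varphi$ and detecting $\mathfrak m$ as an associated prime of the quotient. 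The Stanley-depth lower bound $\sdepth\ge\depth$ would be run in parallel, using Lemma \ref{asia} in place of Lemma \ref{l11}.

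The engine is the short exact sequence
$$0\to S/(I_{n,m}^t:x_n)\xrightarrow{\ \cdot x_n\ } S/I_{n,m}^t\to S/(I_{n,m}^t+(x_n))\to 0.$$
Because $x_n$ divides only the generator $x_{n-m+1}\cdots x_n$ of $I_{n,m}$, the right-hand quotient is $K[x_1,\dots,x_{n-1}]/I_{n-1,m}^t$, of depth $\varphi(n-1,m,t)$ by induction. Everything then hinges on the colon $I_{n,m}^t:x_n$, which I expect to be expressible through $I_{n,m}^t$, the shifted monomial $x_{n-m+1}\cdots x_{n-1}$, and a power-$(t-1)$ instance of the same family, so that its depth and Stanley depth are again accessible by induction. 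Feeding the two outer depths into Lemma \ref{l11}(1), respectively Lemma \ref{asia}, gives the lower bounds.

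The genuine obstacle lives in this last step in the cases where $\varphi$ jumps, i.e.\ $\delta=1$: there the right-hand term has depth one below the target $\varphi(n,m,t)$, so the crude bound $\depth M\ge\min(\depth U,\depth N)$ undershoots, and one must show that $\depth(S/(I_{n,m}^t:x_n))$ is strictly larger and that the connecting map kills the offending local cohomology — equivalently, that the boundary case $\depth U=\depth N+1$ resolves in the favourable direction. This is where precise control of the colon ideal is indispensable, and where a single-variable peeling may have to be replaced by iterating it over a full block of $m+1$ variables so as to realise the jump $+(m-1)$ cleanly; for the Stanley depth, which lacks analogues of parts (2)--(3) of the Depth Lemma, these jump cases would instead be settled by writing down an explicit Stanley decomposition of $S/I_{n,m}^t$ of support $\varphi(n,m,t)$.

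For part (2) I would take $u=(x_1\cdots x_m)^{t-1}$ and prove the colon identity $I_{n,m}^t:u=I_{n,m}$: the inclusion $\supseteq$ holds since $u\,I_{n,m}\subseteq I_{n,m}^t$, while $\subseteq$ follows because $u$ supplies at most $t-1$ of the $t$ generators needed to place $wu$ in $I_{n,m}^t$, forcing $w\in I_{n,m}$. Lemma \ref{lem}(1) then gives $\sdepth(S/I_{n,m})=\sdepth(S/(I_{n,m}^t:u))\ge\sdepth(S/I_{n,m}^t)$, and the value $\sdepth(S/I_{n,m})=\varphi(n,m,1)$ is the base case $t=1$ of part (1), where $I_{n,m}$ is squarefree: there the depth is read off from the minimal primes of the path complex and the Stanley depth from an explicit decomposition, both of cardinality $\varphi(n,m,1)$.
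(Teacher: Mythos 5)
This statement is never proved in the paper you were given: it is recalled verbatim, with a citation, from the authors' earlier work \cite[Theorem 2.6]{lucrare1}, so there is no internal proof to compare against and your proposal has to stand on its own. It does not, although the skeleton is the right one. What you describe — the short exact sequence $0 \to S/(I_{n,m}^t:x_n) \to S/I_{n,m}^t \to S/(I_{n,m}^t,x_n) \to 0$, the identification $(I_{n,m}^t,x_n)=(I_{n-1,m}^t,x_n)$, and the Depth Lemma run in parallel with Lemma \ref{asia} — is indeed the methodology of \cite{lucrare1} and of Theorems \ref{t3} and \ref{t5} here. But the two steps carrying all the difficulty are left open, and you say so yourself. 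First, you never compute $(I_{n,m}^t:x_n)$; you only ``expect'' it to involve $I_{n,m}^{t-1}$ and $x_{n-m+1}\cdots x_{n-1}$. In fact $(I_{n,m}^t:x_n)=I_{n,m}^{t-1}\left(I_{n,m}+(x_{n-m+1}\cdots x_{n-1})\right)$, which is \emph{not} an ideal of the form $I_{n',m}^{t'}$ in fewer variables, so the induction hypothesis as you stated it does not apply to the left-hand term of your sequence; closing the induction requires auxiliary colon lemmas and a more careful choice of monomials to colon by (this is exactly why the present paper, when treating $J_{n,m}$, first proves Lemmas \ref{inmt} and \ref{lemoasa} and colons by products $x_1\cdots x_j$ rather than by a single variable). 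Second, in the jump case $\varphi(n,m,t)=\varphi(n-1,m,t)+1$ the bound $\depth M\geq\min\{\depth U,\depth N\}$ undershoots by one; you flag this as ``the genuine obstacle'' and defer its resolution to an unproved claim about the connecting map, or to an explicit Stanley decomposition of support $\varphi(n,m,t)$ that is never constructed. That construction, together with the matching upper bound $\depth(S/I_{n,m}^t)\leq\varphi(n,m,t)$ (also only gestured at, via an unexhibited associated prime), is the substantive content of the cited theorem.

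Part (2) has the same character. The colon identity $(I_{n,m}^t:(x_1\cdots x_m)^{t-1})=I_{n,m}$ is true, but your argument for the inclusion $\subseteq$ — that $u$ ``supplies at most $t-1$ of the $t$ generators'' — is not a proof: for monomial ideals, divisibility of $wu$ by a product of $t$ generators does not formally force $w$ to be divisible by one generator, and one needs an actual degree or support count specific to path ideals. Moreover, even granting the identity, Lemma \ref{lem}(1) only yields $\sdepth(S/I_{n,m}^t)\leq \sdepth(S/I_{n,m})$; the remaining equality $\sdepth(S/I_{n,m})=\varphi(n,m,1)$ is itself a nontrivial theorem (the squarefree $t=1$ case), which you again defer to an unexhibited decomposition plus an upper-bound argument that is not given. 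In short: your plan reproduces the correct architecture, but every load-bearing component — the colon computation, the jump case, the explicit Stanley decompositions, and the $t=1$ value of the Stanley depth — is missing.
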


\section{Main results}

We consider the following ideal
$$J_{n,m}=I_{n,m}+(x_{n-m+2}\cdots x_nx_1,\; x_{n-m+3}\cdots x_nx_1x_2,\; \ldots,\; x_nx_1\cdots x_{m-1}).$$
Let $d=\gcd(n,m)$ and let $t_0:=t_0(n,m)$ be the maximal integer such that $t_0\leq n-1$ and there exists a positive
integer $\alpha$ such that 
$$mt_0 = \alpha n + d.$$
Let $t\geq t_0$ be an integer.
Let $w=(x_1x_2\cdots x_n)^{\alpha}$, $w_t=w\cdot (x_1\cdots x_m)^{t-t_0}$, $r:=\frac{n}{d}$ and $s:=\frac{m}{d}$.
If $d>1$, we consider the ideal
$$U_{n,d}=(x_1,x_{d+1},\cdots,x_{d(r-1)+1})\cap (x_2,x_{d+2},\cdots,x_{d(r-1)+2})\cap \cdots \cap (x_d,x_{2d},\ldots,x_{rd}).$$
Firstly, we state the following lemma:

\begin{lema}\label{bije}
The map $\frac{\mathbb Z/n\mathbb Z}{r\cdot \left( \mathbb Z/n\mathbb Z \right)} \stackrel{\cdot s}{\longrightarrow}
\frac{\mathbb Z/n\mathbb Z}{r\cdot \left( \mathbb Z/n\mathbb Z \right)} $ is bijective.
\end{lema}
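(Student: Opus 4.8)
The plan is to identify the quotient group explicitly and then reduce the claim to the elementary fact that multiplication by a unit in a finite cyclic group is a bijection. First I would check that the map is well defined: multiplication by $s$ is a group endomorphism of $\mathbb Z/n\mathbb Z$, so it carries the subgroup $r\cdot(\mathbb Z/n\mathbb Z)$ into itself and therefore descends to an endomorphism of the quotient $Q:=\frac{\mathbb Z/n\mathbb Z}{r\cdot(\mathbb Z/n\mathbb Z)}$.

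Next I would identify $Q$. Since $n=rd$ we have $r\mid n$, so the cyclic subgroup $r\cdot(\mathbb Z/n\mathbb Z)$ has order $n/r=d$ and hence $|Q|=r$. More precisely, composing $\mathbb Z\to \mathbb Z/n\mathbb Z$ with the canonical surjection onto $Q$ gives a surjection $\mathbb Z\to Q$ whose kernel is $r\mathbb Z+n\mathbb Z=r\mathbb Z$ (using again $r\mid n$); thus $Q\cong \mathbb Z/r\mathbb Z$, and under this identification the map in the statement becomes multiplication by $s$ on $\mathbb Z/r\mathbb Z$.

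The decisive arithmetic input is that $\gcd(r,s)=1$: indeed $r=\frac{n}{d}$ and $s=\frac{m}{d}$ with $d=\gcd(n,m)$, so $r$ and $s$ have no common factor. Consequently $s$ is a unit modulo $r$, and multiplication by $s$ is a bijection of $\mathbb Z/r\mathbb Z$, which gives the claim. If one prefers to avoid the explicit isomorphism, the same conclusion follows by a direct injectivity argument: if $s\,a\equiv 0$ in $Q$, then a lift satisfies $r\mid s\,a$ in $\mathbb Z$, and coprimality of $r$ and $s$ forces $r\mid a$, i.e. $a\equiv 0$ in $Q$; since $Q$ is finite, injectivity implies surjectivity.

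I do not expect any serious obstacle here. The only point requiring a little care is the correct computation of the order of the subgroup $r\cdot(\mathbb Z/n\mathbb Z)$ and the identification of $Q$ with $\mathbb Z/r\mathbb Z$; once this is in place the bijectivity is an immediate consequence of $\gcd(r,s)=1$.
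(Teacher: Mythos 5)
Your proof is correct and follows the same route as the paper, whose entire argument is the one-line observation that $\gcd(s,r)=1$; you simply supply the details it leaves implicit (well-definedness, the identification $\frac{\mathbb Z/n\mathbb Z}{r\cdot(\mathbb Z/n\mathbb Z)}\cong \mathbb Z/r\mathbb Z$ via $r\mid n$, and that multiplication by a unit is a bijection). No issues.
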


\begin{proof}
It follows from the fact that $\gcd(s,r)=1$.
\end{proof}

\begin{lema}\label{lemmy}
With the above notations, we have:
\begin{enumerate}
\item[(1)] If $d=1$ then $(J_{n,m}^{t}:w_t)=\mathfrak m$ for all $t\geq t_0$.
\item[(2)] If $d>1$ then $(J_{n,m}^{t}:w_t)=U_{n,d}$ for all $t\geq t_0$.
\end{enumerate}
\end{lema}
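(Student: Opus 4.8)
The plan is to translate membership into exponent vectors. For $i\in\mathbb Z/n\mathbb Z$ write $\mathbf 1_{[i]}$ for the multidegree of the cyclic generator $x_ix_{i+1}\cdots x_{i+m-1}$, i.e. the indicator of the length-$m$ cyclic interval starting at $i$. A monomial $u=\mathbf x^{\mathbf c}$ lies in $J_{n,m}^t$ exactly when $\mathbf c\ge\sum_{k=1}^t\mathbf 1_{[i_k]}$ coordinatewise for some choice of $t$ starting points, so $u\in(J_{n,m}^t:w_t)$ iff the multidegree of $u\,w_t$ dominates some sum of $t$ such intervals. The arithmetic engine is that, since $m=sd$, every length-$m$ cyclic interval meets each residue class modulo $d$ in exactly $s$ indices, and so does the block $\{1,\dots,m\}$; hence the multidegree of $w_t=(x_1\cdots x_n)^{\alpha}(x_1\cdots x_m)^{t-t_0}$ summed over one class equals $\alpha r+s(t-t_0)=st-1$ (using $\alpha n=mt_0-d$, i.e. $\alpha r=st_0-1$), whereas any sum of $t$ intervals contributes exactly $st$ to each class.

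The inclusion $(J_{n,m}^t:w_t)\subseteq U_{n,d}$ is then immediate: if $u\,w_t\in J_{n,m}^t$, then in every class the class-total of the multidegree of $u$ is at least $st-(st-1)=1$, so $\supp(u)$ meets all $d$ classes, which is precisely the condition $u\in U_{n,d}$ (and $u\in\mathfrak m$ when $d=1$, where $U_{n,1}=\mathfrak m$).

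For the reverse inclusion it suffices to treat a minimal generator $g=x_{c_1}\cdots x_{c_d}$ with one $c_j$ in each class. Here the class-totals of $g\,w_t$ equal $st$ exactly, so domination by a sum of $t$ intervals forces equality, and the task becomes to write the multidegree $\mathbf v$ of $g\,w_t$ as an \emph{exact} sum of $t$ length-$m$ cyclic intervals. Peeling off $t-t_0$ copies of the interval $[1]$ reduces this to $t=t_0$, i.e. to decomposing $\mathbf v_0=\alpha\mathbf 1+\mathbf 1_{\{c_1,\dots,c_d\}}$ into $t_0$ intervals (here $\mathbf 1$ is the all-ones vector). Now a nonnegative integer vector $(\lambda_i)$ with $\sum_i\lambda_i=t_0$ satisfies $\sum_i\lambda_i\mathbf 1_{[i]}=\mathbf v_0$ if and only if the discrete-derivative equations $\lambda_i-\lambda_{i-m}=\mathbf v_0(i)-\mathbf v_0(i-1)$ hold for all $i$ (the forward direction is clear, and for the converse a derivative match gives equality up to an additive constant, which is pinned to zero by the degree identity $mt_0=\sum_i\mathbf v_0(i)$). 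Because $\gcd(s,r)=1$ (Lemma \ref{bije}), the shift $i\mapsto i-m$ runs through all $r$ indices of a class, so on each class this is a single cyclic recurrence of length $r$; its solvability amounts to the right-hand side summing to zero over each class, which holds because each class meets $\{c_1,\dots,c_d\}$ in exactly one index.

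It remains to produce a nonnegative solution of these equations with total exactly $t_0$. On each class the minimal nonnegative solution is a $\{0,1\}$ step function supported on $A_j\le r-1$ indices, and a short computation shows $\sum_j A_j\equiv t_0\pmod r$, together with the crude bound $\sum_j A_j\le d(r-1)=n-d\le n-1$. Since $t_0$ is by definition the largest integer $\le n-1$ lying in its residue class modulo $r$, this forces $\sum_j A_j\le t_0$; adding $(t_0-\sum_j A_j)/r$ homogeneous solutions of the form ``$+1$ on a whole class'' (distributed arbitrarily) keeps the derivative equations valid, preserves nonnegativity, and raises the total to exactly $t_0$, so by the iff above it yields the desired exact decomposition and hence $g\,w_t\in J_{n,m}^t$. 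The main obstacle is precisely this exactness: one needs a decomposition using the prescribed number $t_0$ of intervals rather than merely a dominating configuration, and the decisive fact that unlocks it is the maximality of $t_0$ within its residue class modulo $r$, which is exactly what guarantees $\sum_j A_j\le t_0$.
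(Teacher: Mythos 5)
Your proposal is correct, and although it shares the paper's skeleton---reduce to $t=t_0$ by peeling off copies of $x_1\cdots x_m$, and prove $(J_{n,m}^t:w_t)\subseteq U_{n,d}$ by comparing exponent sums over residue classes modulo $d$ (your ``$st$ versus $st-1$'' computation is exactly the paper's equations for the $a_i$'s and $b_i$'s)---your handling of the hard inclusion $U_{n,d}\subseteq (J_{n,m}^{t_0}:w)$ is genuinely different. The paper builds the factorization $vw=u_1\cdots u_{t_0}$ by an explicit greedy algorithm: each new cyclic interval starts right after the previous one ends, except that it restarts at $\ell_j$ whenever the previous interval ends there, and correctness is checked by bounding the restart times $k_j\leq jr$, which is where the maximality of $t_0$ in its class modulo $r$ (via $t_0\geq r(d-1)$) enters. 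You instead characterize exact decompositions $\mathbf v_0=\sum_i\lambda_i\mathbf 1_{[i]}$ with $\sum_i\lambda_i=t_0$ by the cyclic difference equations $\lambda_i-\lambda_{i-m}=\mathbf v_0(i)-\mathbf v_0(i-1)$, which split into one length-$r$ recurrence per residue class because $\gcd(r,s)=1$; solvability is the vanishing of the class-sums of the right-hand side, the minimal nonnegative solutions are $\{0,1\}$ arcs of sizes $A_j\leq r-1$, and maximality of $t_0$ enters at the clean point $\sum_j A_j\equiv t_0\pmod r$ together with $\sum_j A_j\leq n-d\leq n-1$, forcing $\sum_j A_j\leq t_0$, after which you pad with constant-on-a-class homogeneous solutions. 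The one step you only assert does check out: writing $c_{j'}$ for the chosen index in the class of $j-1$, one has $A_j m\equiv c_{j'}+1-c_j\pmod n$, hence $A_j\equiv t_0\,(c_{j'}+1-c_j)/d\pmod r$, and summing over $j$ telescopes $\sum_j(c_{j'}+1-c_j)=d$, giving the claimed congruence. What your route buys: it treats $d=1$ and $d>1$ uniformly, it explains structurally why domination by $t$ generators forces exact equality of multidegrees, and it isolates precisely where the maximality of $t_0$ is used; what the paper's route buys is an explicit, constructive factorization (illustrated in its worked example), at the cost of a correctness verification (``it is easy to see that $u_1\cdots u_{t_0}=vw$'') that is sketchier than your difference-equation bookkeeping.
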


\begin{proof}
(1) Note that $\widehat{t_0} = \widehat{m}^{-1}$ in $\mathbb Z/n\mathbb Z$, hence $t_0$ and $\alpha$ are uniquely defined.
    We claim that is enough to show the assertion for $t=t_0$, that is $(J_{n,m}^{t_0}:w)=\mathfrak m$.
		
		Assume that $(J_{n,m}^{t_0}:w)=\mathfrak m$ and $t>t_0$. Since $x_jw\in J_{n,m}^{t_0}$ for all $1\leq j\leq n$, it follows
		that $x_jw_t=x_jw(x_1\cdots x_m)^{t-t_0}\in J_{n,m}^t$ for all $1\leq j\leq n$, and therefore $\mathfrak m\subset (J_{n,m}^t:w_t)$.
		On the other hand, $w_t\notin J_{n,m}^t$ since $\deg(w_t)=mt-1$ and $J_{n,m}^t$ is generated in degree $mt$. Hence
		$(J_{n,m}^{t}:w_t)=\mathfrak m$, and the claim is proved.

    Since $J_{n,m}$ is invariant to circular permutations of variables and $w\notin J_{n,m}^{t_0}$, it is enough to show that
    \begin{equation}\label{clem}
      x_1w=x_1^{\alpha+1}x_2^{\alpha}\cdots x_n^{\alpha} \in G(J_{n,m}^{t_0}).
    \end{equation}
		Indeed, one can easily check that 
		$$x_1w=\prod_{j=0}^{t_0-1}(x_{\ell(mj+1)}\cdots x_{\ell(mj+m)}),$$ where
    $\ell(k)\in \{1,\ldots,n\}$ is the unique integer with $k\equiv \ell(k)(\bmod\;n)$. 

    As $x_{\ell(mj+1)}\cdots x_{\ell(mj+m)}\in G(J_{n,m})$ for all $0\leq j\leq t_0-1$, we proved \eqref{clem} and thus $(1)$.
		
(2) Note that $\deg(w_t)=\alpha n+m(t-t_0) = mt - d$, while $J_{n,m}^t$ is minimally generated by monomials of degree $mt$.
    Also, as $w_t=(x_1\cdots x_m)^{t-t_0}w$ and $x_1\cdots x_m\in G(J_{n,m})$, we have that
		\begin{equation}\label{ek1}
		(J_{n,m}^{t_0}:w) \subseteq (J_{n,m}^t:w_t).
		\end{equation}
	  Let $u=x_1^{a_1}x_2^{a_2}\cdots x_n^{a_n}\in G(J_{n,m}^t)$. 
		Note that, if $u\in G(J_{n,m})$ then $\supp(u)$ contains exactly $s=\frac{m}{d}$ variables whose indices are congruent with $j$ 
		modulo $d$, where $0\leq j\leq d-1$. Therefore, as $d=\gcd(n,m)$, it follows that
		\begin{equation}\label{cooroo}
		a_1+a_{d+1}+\cdots+a_{d(r-1)+1}=a_2+a_{d+2}+\cdots+a_{d(r-1)+2}=\cdots =a_d+a_{2d}+\cdots+a_{rd}=\frac{tm}{d}.
		\end{equation}
		Similarly, if we rewrite $w_t=(x_1\cdots x_n)^{\alpha}(x_1\cdots x_m)^{t-t_0}$ as $w_t=x_1^{b_1}x_2^{b_2}\cdots x_n^{b_n}$
		then we have
		\begin{equation}\label{cooroocoo}
		b_1+b_{d+1}+\cdots+b_{d(r-1)+1}=\cdots =b_d+b_{2d}+\cdots+b_{rd}= \frac{n\alpha}{d}+\frac{m(t-t_0)}{d}= \frac{tm}{d}-1.
		\end{equation}	
    Let $v\in S$ be a monomial such that $vw_t\in J_{n,m}^t$. It follows that there exists $u\in G(J_{n,m}^t)$ such that $u|vw_t$. 
    From \eqref{cooroo} and \eqref{cooroocoo} it follows that
    for every $0\leq j\leq d-1$ there exists $k_j\in \{1,\ldots,n\}$
		with $k_j\equiv j(\bmod\; d)$ such that $x_{k_j}|v$. Therefore, $v\in U_{n,d}$ and thus
		$(J_{n,m}^t:w_t)\subseteq U_{n,d}$.
		Hence, from \eqref{ek1}, in order to prove that $(J_{n,m}^t:w_t) = U_{n,d}$, it is enough to show that 
		\begin{equation}\label{ek2}
		U_{n,d} \subseteq (J_{n,m}^{t_0}:w).
		\end{equation}
    Let $v=x_{\ell_1}x_{\ell_2}\cdots x_{\ell_d}\in G(U_{n,d})$, where $\ell_j \equiv j (\bmod\;d)$.
		In order to prove \eqref{ek2},
		it suffices to show that $vw\in G(J_{n,m}^{t_0})$.
		
		Since $mt_0=\alpha n +d$, by dividing with $d$, it follows that $st_0=\alpha r +1$ and therefore		
		$\overline{t_0}=\overline{s}^{-1}$ in $\mathbb Z/r\mathbb Z$. If $\overline{t_0}=\overline{\ell}$ with $0\leq \ell\leq r-1$,
		then we claim that \begin{equation}\label{tz} t_0=\ell+n-r \geq n-r = r(d-1).\end{equation}
		Let $t_0'=\ell+n-r$. Since $\overline{t_0'}=\overline{t_0}=\overline{s}^{-1}$, we can write
    $\ell s=\beta r + 1$ for some $\beta$ and thus 
		$$st_0'=s(\ell +n-r) = \beta r + s(n-r) + 1 = (\beta+ s(d-1))r + 1 = \alpha' r +1,\text{ where }\alpha'=\beta+s(d-1).$$
		 Hence $mt_0'=\alpha' r +d$. Since $\overline{t_0'}=\overline{t_0}$ in $\mathbb Z/r\mathbb Z$
		and $t_0\leq n-1$ is the greatest integer with $mt_0=\alpha r +d$, it follows that $t_0=t_0'$. Hence, we proved \eqref{tz}.
	 
	  For simplicity, if $j>n$, we denote by $x_j$ the variable $x_{\ell(j)}$,
		where $1\leq \ell(j)\leq n$ such that $j\equiv \ell(j)(\bmod\;n)$. See also the proof of (1).
		
		Given a monomial $u=x_ix_{i+1}\cdots x_{i+m-1}\in G(J_{n,m})$, we let $x_{\min(u)}=x_i$ and $x_{\max(u)}=x_{i+m-1}$ (with the above convention).
		
		We apply the following algorithm:
		\begin{enumerate}
		\item We let $u_1:=x_{\ell_d}x_{\ell_d+1}\cdots x_{\ell_d+m-1}$, where $v=x_{\ell_1}\cdots x_{\ell_d}$; see above.
		\item Assume we defined $u_1,\ldots,u_k$, where $1\leq k\leq t_0-1$. 
		
		If $x_{\max(u_k)}=x_{\ell_j}$ for some $1\leq j\leq d-1$, 
		then we let $u_{k+1}=x_{\ell_j}x_{\ell_j+1}\cdots x_{\ell_j+m-1}$.
		Otherwise, we let $u_{k+1}:=x_{\max(u_k)+1}\cdots x_{\max(u_k)+m}$.
		\item We repeat the step 2. until $k=t_0$.
		\end{enumerate}
		We claim that $vw=u_1u_2\cdots u_{t_0}$. Obviously, $\deg(vw)=mt_0=\deg(u_1\cdots u_{t_0})$.
		
		Let $k_1$ be the minimal index with $\max(u_{k_1})=d-1$. We claim that $k_1\leq r$. Indeed, if $k_1 \geq r$ then 
		$u_1=x_{\ell_d}\cdots x_{\ell_d+m-1},\ldots,u_{r}=x_{\ell_d+(r-1)m}\cdots x_{\ell_d + rm-1}$. From Lemma \ref{bije} and 
		the fact that $m=sd$, it
		follows that $$\{\ell_d+m-1,\cdots,\ell_d+rm-1\} = \{d-1,2d-1,\ldots,rd-1\}
		\text{ using the above convention.}$$
		Since $\ell_{d-1} \equiv (d-1)(\bmod\;d)$, from all the above, it follows that $\ell_{d-1} =\ell_d+rm-1$ and hence $k_1=r$.
		Note that $u_{k_1}=x_{\ell_{d-1}}\cdots x_{\ell_{d-1}+m-1}$.	
		
		Similarly, let $k_2$ be the minimal index with $\max(u_{k_2})=d-2$. Using the same line of arguing, it follows that $k_2\leq 2r$.
		Inductively, let $k_j$ be the minimal index with $\max(u_{k_j})=d-j$, for $j\leq d-1$. Then $k_j\leq jr$. In particular, we
		have that $k_{d-1}\leq (d-1)r \leq t_0$. Also, 		
		for $k>k_{d-1}$, from the definition of $u_k$'s, we have that $\max(u_k)\notin \{\ell_1,\ldots,\ell_{d-1}\}$.
		
		Now, from all the above, it is easy to see that $u_1\cdots u_{t_0}=vw$, as required.
\end{proof}

In the following example, we show how the algorithm given in the proof of Lemma \ref{lemmy}(2) works:

\begin{exm}\rm
Let $n=12$ and $m=8$. Then $d=\gcd(n,m)=4$, $r=3$ and $s=2$. Note that $8\cdot 11 = 7\cdot 12 + 4$ and $t_0=11$ is
the largest integer $\leq n-1=11$ with $t_0m=\alpha n+d$. Also $\alpha=7$. 

We have $w:=(x_1\cdots x_{12})^7$ and
$U_{12,4}=(x_1,x_5,x_9)\cap (x_2,x_6,x_{10})\cap (x_3,x_7,x_{11})\cap (x_4,x_8,x_{12})$. 

Let $v:=x_5x_{2}x_{11}x_4 \in U_{12,4}$. Then $\ell_1=5$, $\ell_2=2$, $\ell_3=11$ and $\ell_4=4$.

We apply the aforementioned algorithm:
\begin{itemize}
\item We let $u_1:=x_4x_5x_6x_7x_8x_9x_{10}x_{11}$.
\item Since $\ell_3=11$, it follows that $k_1=1$ and $u_2=x_{11}x_{12}x_1x_2x_3x_4x_5x_6$.
\item Since $\ell_2\neq 6$, we let $u_3=x_7x_8x_9x_{10}x_{11}x_{12}x_1x_2$.
\item Since $\ell_2=2$, it follows that $k_2=3$ and $u_4=x_2x_3x_4x_5x_6x_7x_8x_9$.
\item Since $\ell_1\neq 9$, we let $u_5=x_{10}x_{11}x_{12}x_1x_2x_3x_4x_5$.
\item Since $\ell_1=5$, it follows that $k_3=5$ and $u_6=x_5x_6x_7x_8x_9x_{10}x_{11}x_{12}$.
\end{itemize}
From now on, the algorithm goes smoothly, and we have: $u_7=x_1x_2x_3x_4x_5x_6x_7x_8$, $u_8=x_9x_{10}x_{11}x_{12}x_1x_2x_3x_4$,
$u_9=u_6$, $u_{10}=u_7$ and $u_{11}=u_8$.
It is easy to see that $u_1u_2\cdots u_{11}=vw$. Therefore $vw\in J_{12,8}^{11}$, as required.
\end{exm}

The following result is elementary. However, we give a proof in order of completeness.

\begin{lema}\label{liema}
Let $d\geq 1$ and $Z_1\cup Z_2\cup \cdots \cup Z_d=\{x_1,\ldots,x_n\}$ be a partition, i.e. $|Z_i|>0$ and $Z_i\cap Z_j=\emptyset$ for all $i\neq j$.
Let $P_i=(Z_i)\subset S$ for $1\leq i\leq d$ and $U:=P_1\cap \cdots \cap P_d$. Then $\depth(S/U)=d-1$.
\end{lema}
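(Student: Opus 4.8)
The plan is to argue by induction on $d$, combining the Mayer--Vietoris short exact sequence with the Depth Lemma (Lemma \ref{l11}). The base case $d=1$ is immediate: here $U=P_1=(Z_1)=\mathfrak m$, so $S/U=K$ and $\depth(S/U)=0=d-1$. For the inductive step I would set $U'=P_1\cap\cdots\cap P_{d-1}$, so that $U=U'\cap P_d$, and feed the sequence
$$0\longrightarrow S/U\longrightarrow S/U'\oplus S/P_d\longrightarrow S/(U'+P_d)\longrightarrow 0,$$
with maps $f\mapsto(f,f)$ and $(f,g)\mapsto f-g$, into the Depth Lemma.

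The heart of the argument is to evaluate the depths of the three modules in this sequence. Since $P_d=(Z_d)$ and $U'$ involves only the variables in $Z_1\cup\cdots\cup Z_{d-1}$, setting $S'=K[Z_1\cup\cdots\cup Z_{d-1}]$ identifies $S/(U'+P_d)$ with $S'/U''$, where $U''=(Z_1)\cap\cdots\cap(Z_{d-1})$ is formed inside $S'$; as this module is annihilated by the variables of $Z_d$, its depth over $S$ equals its depth over $S'$, which by the induction hypothesis (a partition into $d-1$ nonempty blocks) is $d-2$. Similarly $U'=U''S$ is extended from $S'$ by adjoining the $|Z_d|$ variables of $Z_d$, so Lemma \ref{lhvz} gives $\depth(S/U')=(d-2)+|Z_d|$, while $S/P_d$ is a polynomial ring in the remaining $n-|Z_d|$ variables, whence $\depth(S/P_d)=n-|Z_d|$. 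Because every block is nonempty, both of these are $\geq d-1$, so the middle term $M=S/U'\oplus S/P_d$ satisfies $\depth M=\min\{(d-2)+|Z_d|,\,n-|Z_d|\}\geq d-1$, whereas the right-hand term $N=S/(U'+P_d)$ has $\depth N=d-2$.

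It then remains to extract the exact value by applying the Depth Lemma twice to the same sequence, with $S/U$ playing the role of the submodule. Part (2) yields the lower bound $\depth(S/U)\geq\min\{\depth M,\,\depth N+1\}\geq\min\{d-1,\,d-1\}=d-1$. For the upper bound I would invoke part (3): $\depth N\geq\min\{\depth(S/U)-1,\,\depth M\}$; since $\depth N=d-2<d-1\leq\depth M$, the minimum on the right cannot be $\depth M$, so it must equal $\depth(S/U)-1$, forcing $\depth(S/U)-1\leq d-2$, i.e. $\depth(S/U)\leq d-1$. Together these give $\depth(S/U)=d-1$.

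I do not expect a genuinely hard step here; once the sequence is in place the computation is bookkeeping. The one point deserving care, and which I regard as the crux, is the two-sided use of the Depth Lemma: parts (2) and (3) individually give only one-sided estimates, and it is their interplay that pins down the exact value. The hypothesis $|Z_i|>0$ for every $i$ enters precisely in guaranteeing $\depth M\geq d-1$, which is exactly what makes the inequality in part (3) collapse to $\depth(S/U)=d-1$.
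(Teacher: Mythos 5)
Your proof is correct, but the heart of it runs along a genuinely different route than the paper's. Both arguments induct on $d$ and peel off the last block $Z_d$; the paper, however, finishes the inductive step in one stroke by citing \cite[Lemma 1.1]{apop}, which states that for ideals in disjoint sets of variables, $I\subset S_1=K[Z_1\cup\cdots\cup Z_{d-1}]$ and $J\subset S_2=K[Z_d]$, one has $\depth(S/(I\cap J))=\depth(S_1/I)+\depth(S_2/J)+1$; taking $I=P_1\cap\cdots\cap P_{d-1}$ and $J=P_d$ gives $(d-2)+0+1=d-1$ immediately. You instead derive the inductive step from scratch: the Mayer--Vietoris sequence $0\to S/U\to S/U'\oplus S/P_d\to S/(U'+P_d)\to 0$, evaluation of the outer terms via Lemma \ref{lhvz} and the induction hypothesis, and then a two-sided squeeze with parts (2) and (3) of the Depth Lemma (Lemma \ref{l11}). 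Your bookkeeping checks out: the identification $S/(U'+P_d)\cong S'/U''$ has depth exactly $d-2$ (an equality, not just a bound --- which is precisely what the upper-bound half requires), the values $\depth(S/U')=(d-2)+|Z_d|$ and $\depth(S/P_d)=n-|Z_d|$ are right, nonemptiness of all blocks indeed forces both to be at least $d-1$, and your argument that the minimum in part (3) cannot be $\depth M$ is sound. What the paper's route buys is brevity, at the cost of an external reference; what yours buys is self-containedness --- everything reduces to lemmas already stated in the paper plus standard facts (exactness of Mayer--Vietoris, $\depth(M\oplus N)=\min\{\depth M,\depth N\}$, and invariance of depth when a module killed by $(Z_d)$ is viewed over $S'$) --- and in effect your argument re-proves, in the special case needed here, the very formula of Popescu that the paper invokes.
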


\begin{proof}
We use induction on $d\geq 1$. If $d=1$ then $Z_1=\{x_1,\ldots,x_n\}$ and $U=\mathfrak m=(x_1,\ldots,x_n)$. Hence, there is nothing to prove.

Without losing the generality, we can assume that $Z_1\cup\cdots \cup Z_{d-1}=\{x_1,\ldots,x_k\}$ for some $k<n$ and 
$Z_d=\{x_{k+1},\ldots,x_n\}$.
From induction hypothesis, we have that 
$$\depth(S_k/(P_1\cap \cdots \cap P_{d-1}))=d-2,\text{ where }S_k=K[x_1,\ldots,x_k].$$
From \cite[Lemma 1.1]{apop} it follows that \small
$$\depth(S/(P_1\cap\cdots \cap P_d))=\depth(S_k/(P_1\cap \cdots \cap P_{d-1}))+\depth(K[x_{k+1},\ldots,x_n]/P_d)+1=d-1,$$
\normalsize
as required.
\end{proof}

\begin{teor}\label{t2}
With the above notations, we have:
\begin{enumerate}
\item[(1)] If $d=1$ then $\sdepth(S/J_{n,m}^t) = \depth(S/J_{n,m}^t) = 0 \text{ for all }t\geq t_0$.
\item[(2)] If $d>1$ then $\depth(S/J_{n,m}^t) \leq d-1 \text{ for all }t\geq t_0$.
\item[(3)] If $d>1$ then $\sdepth(S/J_{n,m}^t) \leq \sdepth(S/U_{n,d})\leq n - \frac{n}{d}$ for all $t\geq t_0$.
\end{enumerate}
\end{teor}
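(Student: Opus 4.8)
The plan is to leverage Lemma \ref{lemmy}, which computes the colon ideal $(J_{n,m}^t:w_t)$ exactly, together with the general colon-preservation results of Lemmas \ref{lemm} and \ref{lem}. First I observe that by construction $J_{n,m}^t = w_t\cdot(J_{n,m}^t:w_t)$ is \emph{not} quite true on the nose, so instead I would work directly with the inequalities in Lemma \ref{lem}: for the monomial $w_t\notin J_{n,m}^t$ (it has degree $mt-d<mt$, below the generating degree) we have $\depth(S/(J_{n,m}^t:w_t))\geq\depth(S/J_{n,m}^t)$ and $\sdepth(S/(J_{n,m}^t:w_t))\geq\sdepth(S/J_{n,m}^t)$. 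This turns the three claims into statements about the \emph{right-hand} modules, which Lemma \ref{lemmy} identifies concretely.

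For part (1), with $d=1$ Lemma \ref{lemmy}(1) gives $(J_{n,m}^t:w_t)=\mathfrak m$, so $S/(J_{n,m}^t:w_t)=S/\mathfrak m=K$, which has depth and sdepth equal to $0$. By Lemma \ref{lem7} the condition $\mathfrak m\in\Ass(S/J_{n,m}^t)$ is equivalent to $\depth(S/J_{n,m}^t)=0$ and to $\sdepth(S/J_{n,m}^t)=0$; and $\mathfrak m=(J_{n,m}^t:w_t)$ being a colon ideal by $w_t\notin J_{n,m}^t$ exhibits $\mathfrak m$ as an associated prime. Hence both invariants vanish, giving (1).

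For parts (2) and (3), with $d>1$ Lemma \ref{lemmy}(2) gives $(J_{n,m}^t:w_t)=U_{n,d}$. The key structural remark is that $U_{n,d}=P_0\cap P_1\cap\cdots\cap P_{d-1}$ where $P_j=(x_i : i\equiv j\ (\mathrm{mod}\ d))$, and these $d$ prime ideals are generated by a \emph{partition} of $\{x_1,\ldots,x_n\}$ into residue classes mod $d$ (each class is nonempty since $d\mid n$ and $r=n/d\geq 1$). Thus Lemma \ref{liema} applies verbatim and yields $\depth(S/U_{n,d})=d-1$. Combined with $\depth(S/J_{n,m}^t)\leq\depth(S/(J_{n,m}^t:w_t))=\depth(S/U_{n,d})=d-1$ via Lemma \ref{lem}(2), this proves (2). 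For (3), the first inequality $\sdepth(S/J_{n,m}^t)\leq\sdepth(S/U_{n,d})$ is exactly Lemma \ref{lem}(1) applied to $u=w_t$, so it remains to bound $\sdepth(S/U_{n,d})\leq n-\tfrac{n}{d}$.

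The main obstacle is this last sdepth bound, since sdepth does not behave as functorially as depth under the partition decomposition. I would estimate it combinatorially: each generator of $U_{n,d}$ is a product $x_{\ell_0}x_{\ell_1}\cdots x_{\ell_{d-1}}$ picking one variable from each residue class, so $U_{n,d}$ is (up to the partition into $d$ blocks of size $r=n/d$) the "transversal" monomial ideal whose Stanley depth is controlled by the block structure. I expect that the bound $n-n/d = n-r$ comes from the fact that the associated prime $\mathfrak m$ cannot be reached, but rather the minimal primes $P_j$ each have height $r$, and in any Stanley decomposition of $S/U_{n,d}$ some piece $m_iK[Z_i]$ must avoid a full transversal, forcing $|Z_i|\leq n-r$. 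Concretely I would produce, for a suitable standard monomial, a lower bound on the codimension that any admissible $Z_i$ must have, or invoke a known formula for the Stanley depth of intersections of monomial primes coming from a partition (analogous to Lemma \ref{liema} but on the sdepth side); the inequality $\sdepth(S/U_{n,d})\leq n-r$ then follows. This transversal/partition analysis is where the real work lies, whereas parts (1) and (2) are essentially immediate consequences of the colon computation and Lemmas \ref{lem7} and \ref{liema}.
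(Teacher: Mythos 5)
Your proofs of parts (1) and (2) coincide with the paper's own: Lemma \ref{lemmy} identifies the colon ideal, $w_t\notin J_{n,m}^t$ (degree $mt-d<mt$) exhibits $\mathfrak m$ as an associated prime so that Lemma \ref{lem7} gives (1), and Lemma \ref{lem}(2) combined with Lemma \ref{liema} applied to the residue-class partition gives (2); the first inequality of (3) is likewise identical. The only divergence is the final bound $\sdepth(S/U_{n,d})\le n-\frac{n}{d}$. The paper obtains it by writing $U_{n,d}=(x_d,x_{2d},\ldots,x_{rd})\cap U'$ and citing \cite[Theorem 1.3]{mirci}, which bounds the sdepth of such an intersection by $\sdepth(S/(x_d,\ldots,x_{rd}))=n-r$; you instead sketch a direct combinatorial argument from the transversal structure of $G(U_{n,d})$, hedged with ``I expect'' and ``I would produce \ldots or invoke a known formula.'' Your idea is correct and closes in one line, and in fact more is true than you claim: \emph{every} piece of a Stanley decomposition $S/U_{n,d}=\bigoplus_i m_iK[Z_i]$, not just some piece, must miss an entire residue class. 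Indeed, if $Z_i$ contained variables $x_{k_1},\ldots,x_{k_d}$ with $k_j\equiv j(\bmod\; d)$ for all $j$, then $x_{k_1}\cdots x_{k_d}\in G(U_{n,d})$, so the monomial $m_ix_{k_1}\cdots x_{k_d}$ of $m_iK[Z_i]$ would lie in $U_{n,d}$, i.e.\ it would be zero in $S/U_{n,d}$, contradicting the freeness of the piece; hence some class $j$ satisfies $Z_i\cap\{x_j,x_{d+j},\ldots,x_{d(r-1)+j}\}=\emptyset$ and $|Z_i|\le n-r$. With that line added, your route is self-contained and elementary where the paper's is shorter but rests on an external theorem; both are valid.
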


\begin{proof}
(1) From Lemma \ref{lemmy}(1), it follows that $\mathfrak m \in \Ass(S/J_{n,m}^t)$ for all $t\geq t_0$.
    Therefore, the required conclusion follows from Lemma \ref{lem7}.
		
(2) From Lemma \ref{lemmy}(2), it follows that $(J_{n,m}^{t}:w_t)=U_{n,d}$. From Lemma \ref{lem}(2) it follows
that $$\depth(S/J_{n,m}^t)\leq \depth(S/U_{n,d}).$$
On the other hand, from Lemma \ref{liema} it follows that $\depth(S/U_{n,d})=d-1$ and therefore $\depth(S/J_{n,m}^t)\leq d-1$.

(3) Similarly, from Lemma \ref{lem}(1) it follows that $\sdepth(S/J_{n,m}^t)\leq \sdepth(S/U_{n,d})$.
On the other hand, since $U_{n,d}=(x_d,\ldots,x_{dr})\cap U'$, where $U'=(x_1,\ldots,x_{r(d-1)+1})\cap \cdots
\cap (x_{d-1},\ldots,x_{dr-1})$, from \cite[Theorem 1.3]{mirci}
it follows that $$\sdepth(S/U_{n,d})\leq \sdepth(S/(x_d,\ldots,x_{dr}))= n-r=n-\frac{n}{d},$$
as required.
\end{proof}

Our computer experiments in CoCoA \cite{cocoa} yield us to propose the following conjecture:

\begin{conj}\label{wish}
We have that $$\depth(S/J_{n,m}^t) \geq d-1\text{ for all }t\geq 1.$$
\end{conj}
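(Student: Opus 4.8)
The plan is to peel off a single variable with a short exact sequence, thereby reducing the cycle to a path whose depth is already known, and then to isolate the one genuinely new ingredient as a statement about a colon ideal. Concretely, fix $t\geq 1$ and consider
$$0\longrightarrow S/(J_{n,m}^t:x_n)\stackrel{\cdot x_n}{\longrightarrow} S/J_{n,m}^t\longrightarrow S/(J_{n,m}^t+(x_n))\longrightarrow 0.$$
Reducing modulo $x_n$ kills exactly the cyclic $m$-windows passing through the index $n$ and leaves the $m$-consecutive products supported on $\{x_1,\dots,x_{n-1}\}$; hence $S/(J_{n,m}^t+(x_n))\cong S'/I_{n-1,m}^t$ with $S'=K[x_1,\dots,x_{n-1}]$, so by Theorem \ref{depth} its depth equals $\varphi(n-1,m,t)$. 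Applying part (1) of the Depth Lemma (Lemma \ref{l11}) gives
$$\depth(S/J_{n,m}^t)\ \geq\ \min\{\varphi(n-1,m,t),\ \depth(S/(J_{n,m}^t:x_n))\},$$
so it suffices to bound both terms below by $d-1$.

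The first term is elementary and costs nothing. Since $d=\gcd(n,m)$ divides $m$ we have $d-1\leq m-1$, and a direct inspection of the piecewise formula for $\varphi$ shows $\varphi(n-1,m,t)\geq m-1$ for every $t$, the minimum value $m-1$ being attained precisely for $t>n-m$. Hence $\varphi(n-1,m,t)\geq d-1$. I would also record that the small powers are already settled: for $t\leq n/m$, i.e. $n\geq mt$, Theorem \ref{t5} yields $\depth(S/J_{n,m}^t)\geq\varphi(n-1,m,t)\geq m-1\geq d-1$, so the real content of the conjecture lies in the range $t>n/m$.

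The crux is therefore the lower bound $\depth(S/(J_{n,m}^t:x_n))\geq d-1$ for the colon term, and this is the step I expect to be the main obstacle. A monomial $u$ lies in $(J_{n,m}^t:x_n)$ exactly when $x_nu\in J_{n,m}^t$, and the resulting generators mix the surviving path ideal $I_{n-1,m}$ with the wrap-around windows divided by $x_n$; unlike $J_{n,m}^t$ itself this ideal is no longer invariant under the circular symmetry, which is what makes it resist direct analysis. My intended line of attack is an induction, either on $t$ via the sequence $0\to J_{n,m}^{t-1}/J_{n,m}^t\to S/J_{n,m}^t\to S/J_{n,m}^{t-1}\to 0$, which by the Depth Lemma reduces the inductive step to proving $\depth(J_{n,m}^{t-1}/J_{n,m}^t)\geq d-1$, or a nested induction on $n$ and $t$ in which the colon ideal is split into pieces each retaining a $\gcd$-type $d$-fold structure, mirroring the disjoint-support computation of Lemma \ref{liema} that produces the value $d-1$. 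The guiding heuristic is that every generator of $J_{n,m}$ uses exactly $s=m/d$ variables from each residue class modulo $d$ (the observation in the proof of Lemma \ref{lemmy}(2)), so the $d$ residue classes behave like $d$ weakly coupled blocks and no associated prime should be able to collapse past dimension $d-1$; converting this heuristic into an honest regular sequence of length $d-1$, or equivalently into a vanishing statement $H^i_{\mathfrak{m}}(S/J_{n,m}^t)=0$ for $i>n-d+1$, is exactly the remaining difficulty.
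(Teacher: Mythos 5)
First, a point of orientation: the statement you were asked to prove is not a theorem of the paper at all — it is Conjecture \ref{wish}, which the authors propose on the basis of CoCoA experiments and do not prove; the paper only establishes the complementary \emph{upper} bound $\depth(S/J_{n,m}^t)\leq d-1$ for $t\geq t_0$ (Theorem \ref{t2}(2)). So there is no paper proof to compare against, and the only question is whether your argument settles the conjecture. It does not. The parts that are correct: the short exact sequence $0\to S/(J_{n,m}^t:x_n)\to S/J_{n,m}^t\to S/(J_{n,m}^t,x_n)\to 0$; the identification $(J_{n,m}^t,x_n)=(I_{n-1,m}^t,x_n)$ from Lemma \ref{inmt}, giving $\depth(S/(J_{n,m}^t,x_n))=\varphi(n-1,m,t)$ via Theorem \ref{depth}; the verification that $\varphi(n-1,m,t)\geq m-1\geq d-1$ (writing $n+1-t=q(m+1)+r$ with $q\geq 1$ in the first branch, one gets $\varphi=q(m-1)+\max\{r-1,0\}\geq m-1$); and the remark that Theorem \ref{t5}(3) already disposes of the range $n\geq mt$, $t\geq 2$. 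This is all sound, and it reproduces exactly the mechanism behind the paper's Theorem \ref{t3}.

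The genuine gap is the step you yourself flag as the crux: $\depth(S/(J_{n,m}^t:x_n))\geq d-1$. Nothing in the proposal proves it — you offer two induction schemes and a heuristic about the $d$ residue classes modulo $d$, but no argument. Worse, this step is not a smaller sub-problem: by Lemma \ref{lem}(2) one always has $\depth(S/(J_{n,m}^t:x_n))\geq \depth(S/J_{n,m}^t)$, so the colon bound you need is \emph{implied by} the conjecture, and (via your exact sequence and the Depth Lemma) it also \emph{implies} the conjecture. Your reduction therefore replaces the conjecture by an equivalent statement about a less symmetric ideal, not by anything easier; the same objection applies to the alternative route through $\depth(J_{n,m}^{t-1}/J_{n,m}^t)\geq d-1$, which by Lemma \ref{l11}(2) applied to $0\to J_{n,m}^{t-1}/J_{n,m}^t\to S/J_{n,m}^t\to S/J_{n,m}^{t-1}\to 0$ is again essentially as strong as what is to be proved. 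What would constitute actual progress is precisely what your heuristic gestures at and does not deliver: an explicit regular sequence of length $d-1$ on $S/J_{n,m}^t$ (for instance built from differences of variables in the same residue class modulo $d$), or a vanishing statement for the corresponding local cohomology in the critical range $mt>n$. As it stands, the proposal is a correct framing of the problem plus a restatement of the open difficulty, and the conjecture remains open exactly where your argument stalls.
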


\begin{obs}\rm
Let $n>m\geq 2$ be two integers and let $d:=\gcd(n,m)$. From Theorem \ref{t2} we have that
$\depth(S/J_{n,m}^t) \leq d-1$ for all $t\geq t_0$. Hence, if Conjecture \ref{wish} is true,
then $$\lim_{t\to\infty} \depth(S/J_{n,m}^t) = d-1.$$
\end{obs}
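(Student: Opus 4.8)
The plan is to read the claimed limit as a two-sided squeeze on the sequence $\bigl(\depth(S/J_{n,m}^t)\bigr)_{t\geq 1}$, showing it is in fact eventually constant and equal to $d-1$. The upper bound is already available from Theorem~\ref{t2}: when $d>1$ part~(2) gives $\depth(S/J_{n,m}^t)\leq d-1$ for every $t\geq t_0$, while when $d=1$ part~(1) gives the stronger $\depth(S/J_{n,m}^t)=0=d-1$ on the same range. The missing ingredient is a matching lower bound, which is exactly Conjecture~\ref{wish}, assumed here as the hypothesis of the (conditional) remark.

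First I would dispose of the case $d=1$: here Theorem~\ref{t2}(1) already forces $\depth(S/J_{n,m}^t)=0$ for all $t\geq t_0$, so the sequence is eventually constant equal to $0=d-1$ and no appeal to the conjecture is needed. For $d>1$ I would invoke Conjecture~\ref{wish} to record $\depth(S/J_{n,m}^t)\geq d-1$ for all $t\geq 1$, in particular for $t\geq t_0$, and combine it with the upper bound $\depth(S/J_{n,m}^t)\leq d-1$ from Theorem~\ref{t2}(2). This pins the value to the exact equality $\depth(S/J_{n,m}^t)=d-1$ for every $t\geq t_0$.

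In both cases the sequence $\bigl(\depth(S/J_{n,m}^t)\bigr)_t$ is literally constant from the index $t_0$ onwards, and a sequence that is eventually constant converges to that constant; hence $\lim_{t\to\infty}\depth(S/J_{n,m}^t)=d-1$. One could alternatively cite Brodmann's stabilization of $\depth(S/I^t)$ for $t\gg 0$ to guarantee existence of the limit, but this is superfluous since the stabilization value $d-1$ is here identified explicitly on the concrete range $t\geq t_0$.

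The only real obstacle is the one already signalled in the statement: for $d>1$ the lower bound rests on Conjecture~\ref{wish}, which is still open. Everything that follows from that assumption is a one-line squeeze with no computational or structural content, so the entire weight of the remark sits on the inequality $\depth\geq d-1$; the matching upper bound and the passage to the limit are immediate.
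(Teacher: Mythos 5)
Your proposal is correct and matches the paper's (implicit) argument exactly: the remark is just the squeeze between the upper bound of Theorem \ref{t2} for $t\geq t_0$ and the conjectural lower bound $\depth(S/J_{n,m}^t)\geq d-1$, giving an eventually constant sequence with limit $d-1$. Your additional observations --- that the case $d=1$ needs no appeal to the conjecture, and that Brodmann stabilization is superfluous here --- are accurate but not needed.
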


\begin{cor}\label{t4}
We have that:
\begin{enumerate}
\item[(1)] If $n$ is odd, then $\sdepth(S/J_{n,n-2}^t)=\depth(S/J_{n,n-2}^t)=0$ for all $t\geq \frac{n-1}{2}$.
\item[(2)] If $n$ is even, then $\depth(S/J_{n,n-2}^t)\leq 1$ for all $t\geq n-1$.
\item[(3)] If $n$ is even, then $\sdepth(S/J_{n,n-2}^t)\leq \frac{n}{2}$ for all $t\geq n-1$.
\end{enumerate}
\end{cor}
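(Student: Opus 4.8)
The plan is to obtain all three assertions as direct specializations of Theorem \ref{t2} to the case $m = n-2$, so the entire task reduces to computing the two invariants $d = \gcd(n, n-2)$ and the threshold $t_0 = t_0(n, n-2)$, and then reading off the appropriate part of that theorem.

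First I would compute $d$. Since $\gcd(n, n-2) = \gcd(n, 2)$, we get $d = 1$ when $n$ is odd and $d = 2$ when $n$ is even. This is precisely the dichotomy separating part (1) from parts (2)--(3): for $d = 1$ Theorem \ref{t2}(1) forces both invariants to vanish, whereas for $d = 2$ Theorem \ref{t2}(2)--(3) give the upper bounds $d-1 = 1$ and $n - \frac{n}{d} = \frac{n}{2}$.

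Next I would pin down $t_0$, the largest integer $t_0 \le n-1$ for which $(n-2)t_0 = \alpha n + d$ admits a positive integer solution $\alpha$. Reducing modulo $n$ and using $n-2 \equiv -2 \pmod n$, this congruence becomes $2 t_0 \equiv -d \pmod n$. For $n$ odd with $d = 1$ it reads $2 t_0 \equiv n-1 \pmod n$; as $2$ is invertible modulo the odd number $n$, the unique solution in $\{1,\dots,n-1\}$ is $t_0 = \frac{n-1}{2}$, and one checks $\alpha = \frac{n-3}{2} \ge 1$ for $n \ge 5$. For $n$ even with $d = 2$ the congruence $2 t_0 \equiv n-2 \pmod n$ has exactly the two solutions $\frac{n}{2}-1$ and $n-1$ in $\{1,\dots,n-1\}$, so the maximal one is $t_0 = n-1$, realized with $\alpha = n-3 \ge 1$ for $n \ge 4$.

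With $d$ and $t_0$ determined, the conclusions follow immediately. For $n$ odd, applying Theorem \ref{t2}(1) with $t_0 = \frac{n-1}{2}$ gives $\sdepth(S/J_{n,n-2}^t) = \depth(S/J_{n,n-2}^t) = 0$ for all $t \ge \frac{n-1}{2}$, which is (1). For $n$ even, Theorem \ref{t2}(2) with $t_0 = n-1$ yields $\depth(S/J_{n,n-2}^t) \le d - 1 = 1$ for $t \ge n-1$, giving (2), and Theorem \ref{t2}(3) yields $\sdepth(S/J_{n,n-2}^t) \le n - \frac{n}{2} = \frac{n}{2}$ for $t \ge n-1$, giving (3). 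There is no genuine obstacle here beyond the elementary number-theoretic bookkeeping of the previous paragraph; the only point requiring attention is verifying that the candidate $t_0$ is actually realized by a \emph{positive} $\alpha$, which fails only for degenerate small values of $n$ that are already excluded by the standing hypothesis $m = n-2 \ge 2$.
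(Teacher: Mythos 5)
Your proposal is correct and follows essentially the same route as the paper: compute $d=\gcd(n,n-2)$ and the threshold $t_0(n,n-2)$, then read off the three claims from Theorem \ref{t2}(1)--(3). In fact your bookkeeping is more careful than the paper's — in the odd case you correctly obtain $\alpha=\frac{n-3}{2}$, whereas the paper's proof states $\alpha=\frac{n-1}{3}$, an evident typo.
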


\begin{proof}
(1) Since $n$ is odd, we have $d=\gcd(n,n-2)=1$. It is easy to see that $t_0=\frac{n-1}{2}$ and $\alpha=\frac{n-1}{3}$.
    Hence, from Theorem \ref{t2}(1) it follows that 
		$$\sdepth(S/J_{n,n-2}^t)=\depth(S/J_{n,n-2}^t)=0\text{ for all }t\geq \frac{n-1}{2}.$$
		
(2) Since $n$ is even, we have $d=\gcd(n,n-2)=2$. It is easy to see that $t_0=n-1$ and $\alpha=n-3$.
    From Theorem \ref{t2}(2) it follows that 
		$$\depth(S/J_{n,n-2}^t)= 1\text{ for all }t\geq n-1.$$
	
(3) As in the proof of (2), from Theorem \ref{t2}(3) it follows that
		$$\sdepth(S/J_{n,n-2}^t)\leq \frac{n}{2}\text{ for all }t\geq n-1.$$
		Hence, the proof is complete.
\end{proof}

\begin{lema}\label{inmt}
Let $n>m\geq 2$ and $t\geq 1$ be some integers. Then
$$ (J_{n,m}^t,x_n)=(I_{n-1,m}^t,x_n).$$
\end{lema}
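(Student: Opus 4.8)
The plan is to reduce the claim for powers to the single identity $(J_{n,m},x_n)=(I_{n-1,m},x_n)$ and then transport it through the quotient map. Write $\bar S=S/(x_n)\cong K[x_1,\ldots,x_{n-1}]$ and let $\pi:S\to\bar S$ be the canonical projection. For any ideal $I\subset S$ one has $\pi^{-1}(\pi(I))=I+(x_n)=(I,x_n)$, and since $\pi$ is a ring homomorphism, $\pi(I^t)=\pi(I)^t$ for every $t\geq 1$. Consequently, the equality $(J_{n,m}^t,x_n)=(I_{n-1,m}^t,x_n)$ is equivalent to $\pi(J_{n,m}^t)=\pi(I_{n-1,m}^t)$, which in turn equals $\pi(J_{n,m})^t=\pi(I_{n-1,m})^t$. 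Thus it suffices to establish the case $t=1$, namely $\pi(J_{n,m})=\pi(I_{n-1,m})$, equivalently $(J_{n,m},x_n)=(I_{n-1,m},x_n)$; raising both sides to the $t$-th power in $\bar S$ then yields the general statement.

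For the case $t=1$ I would compare minimal monomial generators. Among the path generators $x_i\cdots x_{i+m-1}$ of $I_{n,m}$ (where $1\leq i\leq n-m+1$), the variable $x_n$ divides exactly one, the generator $x_{n-m+1}\cdots x_n$ corresponding to $i=n-m+1$; the remaining ones, for $1\leq i\leq n-m$, are precisely the generators of $I_{n-1,m}$ and are free of $x_n$. Each of the wrap-around generators $x_{n-m+2}\cdots x_nx_1,\;\ldots,\;x_nx_1\cdots x_{m-1}$ of $J_{n,m}$ is divisible by $x_n$. Hence every generator of $J_{n,m}$ that is not already a generator of $I_{n-1,m}$ lies in $(x_n)$, and conversely every generator of $I_{n-1,m}$ is a generator of $J_{n,m}$, which gives $(J_{n,m},x_n)=(I_{n-1,m},x_n)$ at once.

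The argument is essentially bookkeeping and there is no serious obstacle. The only steps demanding care are the range check in the paragraph above, that is, verifying that $x_{n-m+1}\cdots x_n$ is the unique path generator divisible by $x_n$ and that all wrap-around generators contain $x_n$, together with the clean identity $\pi(I^t)=\pi(I)^t$ for the quotient map, which is precisely what makes the reduction from arbitrary $t$ to $t=1$ work.
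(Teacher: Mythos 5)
Your proof is correct and rests on the same key observation as the paper's one-line argument: every minimal monomial generator of $J_{n,m}$ not divisible by $x_n$ is a generator of $I_{n-1,m}$, while all wrap-around generators and the last path generator lie in $(x_n)$. The paper handles the exponent $t$ directly (a generator of $J_{n,m}^t$ is a product of $t$ generators of $J_{n,m}$, each either divisible by $x_n$ or lying in $I_{n-1,m}$), whereas your reduction to $t=1$ via the projection $\pi:S\to S/(x_n)$ and the identity $\pi(I^t)=\pi(I)^t$ is simply a cleaner formal packaging of the same step.
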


\begin{proof}
The inclusion $\supseteq$ is obvious. The converse inclusion follows from the observation that a minimal monomial generator of $J_{n,m}$ which
is not divisible by $x_n$ belongs to $I_{n-1,m}$
\end{proof}

\begin{teor}\label{t3}
Let $n > m\geq 2$ and $t\geq 1$ be some integers. Then:
\begin{enumerate}
\item[(1)] $\depth(S/J_{n,m}^t)\leq \varphi(n-1,m,t)+1$.
\item[(2)] If $\depth(S/(J_{n,m}^t:x_n))>\depth(S/J_{n,m}^t)$ then $\depth(S/J_{n,m}^t)=\varphi(n-1,m,t)$.
\end{enumerate}
\end{teor}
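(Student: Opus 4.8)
The plan is to exploit the short exact sequence induced by the variable $x_n$, combined with the identification of the quotient $(J_{n,m}^t,x_n)$ supplied by Lemma \ref{inmt} and the depth formula for the path ideal from Theorem \ref{depth}. First I would write down the standard short exact sequence of $\mathbb Z^n$-graded $S$-modules
$$0 \longrightarrow S/(J_{n,m}^t : x_n) \stackrel{\cdot x_n}{\longrightarrow} S/J_{n,m}^t \longrightarrow S/(J_{n,m}^t, x_n) \longrightarrow 0.$$
By Lemma \ref{inmt} we have $(J_{n,m}^t, x_n) = (I_{n-1,m}^t, x_n)$, and since $I_{n-1,m}$ is an ideal of $S' := K[x_1,\ldots,x_{n-1}]$, the right-hand term is annihilated by $x_n$ and isomorphic to $S'/I_{n-1,m}^t$; hence its depth equals $\depth_{S'}(S'/I_{n-1,m}^t) = \varphi(n-1,m,t)$ by Theorem \ref{depth}(1), which applies since $2\leq m\leq n-1$. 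For brevity write $U = S/(J_{n,m}^t:x_n)$, $M = S/J_{n,m}^t$ and $N = S/(J_{n,m}^t,x_n)$, so that $\depth N = \varphi(n-1,m,t)$.

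For part (1), I would apply Lemma \ref{l11}(3) to this sequence, obtaining $\depth N \geq \min\{\depth U - 1,\ \depth M\}$. If the minimum is $\depth M$, then $\depth M \leq \depth N = \varphi(n-1,m,t)$. If instead the minimum is $\depth U - 1$, then $\depth U \leq \depth N + 1$; combining this with $\depth M \leq \depth U$, which holds by Lemma \ref{lem}(2) because $x_n \notin J_{n,m}^t$, again yields $\depth M \leq \depth N + 1$. In either case $\depth M \leq \varphi(n-1,m,t) + 1$, which is the claim.

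For part (2), I would assume $\depth U > \depth M$. Since depths are integers this gives $\depth U - 1 \geq \depth M$, so Lemma \ref{l11}(3) forces $\depth N \geq \min\{\depth U - 1,\ \depth M\} = \depth M$. On the other hand, Lemma \ref{l11}(1) gives $\depth M \geq \min\{\depth N,\ \depth U\}$; were this minimum equal to $\depth U$ we would reach the contradiction $\depth M \geq \depth U > \depth M$, so the minimum must be $\depth N$ and hence $\depth M \geq \depth N$. The two inequalities together yield $\depth M = \depth N = \varphi(n-1,m,t)$.

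The argument is entirely formal once the correct exact sequence is set up and $N$ is identified; the genuine content sits in the auxiliary results rather than here. The only point demanding care is the bookkeeping of which clause of the Depth Lemma to invoke and in which direction, together with verifying that $\depth N$ is exactly $\varphi(n-1,m,t)$, which rests on the applicability of Theorem \ref{depth}(1) to $I_{n-1,m}$. I do not anticipate a serious obstacle beyond this; in particular, no structural analysis of $U=S/(J_{n,m}^t:x_n)$ itself is needed, since it enters only through the inequality $\depth M\leq\depth U$ in part (1) and through the hypothesis in part (2).
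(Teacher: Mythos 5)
Your proposal is correct and takes essentially the same route as the paper: the same short exact sequence $0 \to S/(J_{n,m}^t:x_n) \to S/J_{n,m}^t \to S/(J_{n,m}^t,x_n) \to 0$, the identification $\depth(S/(J_{n,m}^t,x_n))=\varphi(n-1,m,t)$ via Lemma \ref{inmt} and Theorem \ref{depth}, and the Depth Lemma \ref{l11} combined with the inequality $\depth(S/J_{n,m}^t)\leq \depth(S/(J_{n,m}^t:x_n))$ from Lemma \ref{lem}(2). For part (2), where the paper only writes ``as in the proof of (1),'' your case analysis with Lemma \ref{l11}(1) and \ref{l11}(3) is precisely the intended argument.
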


\begin{proof}
(1) We consider the short exact sequence
\begin{equation}\label{sire}
 0 \to S/(J_{n,m}^t:x_n) \to S/J_{n,m}^t \to S/(J_{n,m}^t,x_n) \to 0.
\end{equation}
From Lemma \ref{inmt} and Theorem \ref{depth} it follows that
\begin{equation}
\depth(S/(J_{n,m}^t,x_n)) = \depth(S/(I_{n-1,m}^t,x_n))=\varphi(n-1,m,t).
\end{equation}
Let $s:=\depth(S/(J_{n,m}^t:x_n))$ and $d:=\depth(S/J_{n,m}^t)$. From Lemma \ref{lem}(2), we have that $s\geq d$.
Therefore, according to Lemma \ref{l11} (Depth lemma), it follows that
$$ \varphi(n-1,m,t)\geq \min\{s-1,d\}\geq \min\{d-1,d\} = d-1. $$
Hence, $d\leq \varphi(n-1,m,t)+1$, as required.

(2) As in the proof of (1), it follows from \eqref{sire} and Lemma \ref{l11} (Depth lemma).
\end{proof}

\section{Some special cases}

We use the notations from the previous section.

\begin{teor}\label{t1}
We have that:
 $$\sdepth(S/J_{n,n-1}^t)=\depth(S/J_{n,n-1}^t)=\begin{cases} n -t -1, & t \leq n-2 \\ 0,& t \geq n-1 \end{cases}.$$
\end{teor}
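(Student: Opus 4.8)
The plan is to induct on $n$, handling $\depth$ and $\sdepth$ in parallel, after first disposing of the range $t\geq n-1$. For the pair $(n,n-1)$ one has $d=\gcd(n,n-1)=1$, and the congruence $(n-1)t_0\equiv 1\pmod n$ (i.e. $t_0\equiv -1$) forces $t_0=n-1$ with $\alpha=n-2$; hence Theorem \ref{t2}(1) already gives $\sdepth(S/J_{n,n-1}^t)=\depth(S/J_{n,n-1}^t)=0$ for all $t\geq n-1$, matching $\max\{n-t-1,0\}$ there. It remains to treat $1\leq t\leq n-2$ and show both invariants equal $n-t-1$. The computational backbone is an explicit description of the powers: multiplying out $t$ of the generators $x_1\cdots x_n/x_i$ shows that $J_{n,n-1}^{t}$ is minimally generated by all monomials $\prod_j x_j^{c_j}$ of degree $t(n-1)$ with $0\le c_j\le t$, equivalently $\prod_j x_j^{c_j}\in J_{n,n-1}^t \iff \sum_{j=1}^n (t-c_j)^+\le t$. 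I would establish this identity first, since every step below reads off from it.

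For the upper bounds I would exhibit a single colon that is a prime of the right height. Set $u=(x_1\cdots x_{t+1})^{t-1}(x_{t+2}\cdots x_n)^{t}$, which is well defined as $t+1\le n-1$. Using the membership criterion one checks directly that $u\notin J_{n,n-1}^t$ and $(J_{n,n-1}^t:u)=(x_1,\dots,x_{t+1})$: the total deficiency $\sum_j (t-c_j)^+$ of $u$ equals $t+1$ and is carried entirely by $x_1,\dots,x_{t+1}$, so multiplying $u$ by $x_i$ lands in $J^t$ exactly when $i\le t+1$, and no monomial supported on $\{x_{t+2},\dots,x_n\}$ can push $u$ into $J^t$. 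Since $S/(x_1,\dots,x_{t+1})\cong K[x_{t+2},\dots,x_n]$ has depth and Stanley depth $n-t-1$, Lemma \ref{lem} gives at once $\depth(S/J_{n,n-1}^t)\le n-t-1$ and $\sdepth(S/J_{n,n-1}^t)\le n-t-1$.

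The lower bounds are the heart of the matter. I would consider the colon chain $J^t=Q_0\subseteq Q_1\subseteq\cdots\subseteq Q_t$, where $Q_k=(J_{n,n-1}^t:x_n^{k})$. From the membership criterion, the $x_n$-exponent becomes irrelevant once $k=t$, so $Q_t=J_{n-1,n-2}^t\,S$; and setting $x_n=0$ in $Q_k$ yields the ``Veronese type'' ideal $L_k=\{\,c'\in\mathbb N^{n-1}:\ \sum_{j<n}(t-c'_j)^+\le k\,\}$ in $\tilde S=K[x_1,\dots,x_{n-1}]$, with $L_0=((x_1\cdots x_{n-1})^t)=I_{n-1,n-1}^t$ (recovering Lemma \ref{inmt}). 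Multiplication by $x_n$ then produces short exact sequences
\begin{equation*}
0\to S/Q_{k+1}\xrightarrow{\,\cdot x_n\,} S/Q_k\to \tilde S/L_k\to 0,\qquad k=0,1,\dots,t-1.
\end{equation*}
The key sublemma is that for $1\le k\le t-1$ every monomial of $L_k$ is divisible by $x_1\cdots x_{n-1}$ (a deficiency $\le k<t$ forbids any exponent from dropping to $0$), so $L_k=(x_1\cdots x_{n-1})(L_k:x_1\cdots x_{n-1})$ and Lemma \ref{lemm} lets me strip $x_1\cdots x_{n-1}$ repeatedly, reducing $L_k$ to $J_{n-1,n-2}^{k}$ without changing $\depth$ or $\sdepth$; by induction $\depth(\tilde S/L_k)=\sdepth(\tilde S/L_k)=(n-1)-k-1=n-k-2\ge n-t-1$. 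By induction and Lemma \ref{lhvz}, $\depth(S/Q_t)=\sdepth(S/Q_t)=n-t-1$, while Theorem \ref{depth} gives $\depth(\tilde S/L_0)=\sdepth(\tilde S/L_0)=\varphi(n-1,n-1,t)=n-2$. Feeding these into the Depth Lemma \ref{l11}(1) along the chain yields $\depth(S/J_{n,n-1}^t)\ge\min\{\,n-t-1,\ n-2,\ \min_{1\le k\le t-1}(n-k-2)\,\}=n-t-1$, and the identical bookkeeping with Lemma \ref{asia} yields $\sdepth(S/J_{n,n-1}^t)\ge n-t-1$. With the upper bounds this closes the induction; the base case $n=3$ is the triangle $J_{3,2}$, treated directly alongside Theorem \ref{t2}.

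The step I expect to cost the most care, and the genuine obstacle, is the analysis of the intermediate colon modules $S/Q_k$ and $\tilde S/L_k$: extracting the exact identifications $Q_t=J_{n-1,n-2}^t S$ and $L_k=\{\sum_{j<n}(t-c'_j)^+\le k\}$ from the membership criterion, and proving the divisibility-by-$x_1\cdots x_{n-1}$ property that makes Lemma \ref{lemm} applicable and collapses $L_k$ onto the honest power $J_{n-1,n-2}^{k}$ governed by the inductive hypothesis. Everything else is formal manipulation of the Depth Lemma, Lemma \ref{asia}, and Lemma \ref{lhvz}.
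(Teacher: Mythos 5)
Your proposal is correct and follows essentially the same route as the paper: induction on $n$, the colon chain $(J_{n,n-1}^t:x_n^k)$ with its short exact sequences, identification of the $x_n$-quotients with $(x_1\cdots x_{n-1})$-multiples of powers of $J_{n-1,n-2}$ that are stripped via Lemma \ref{lemm}, and the Depth Lemma together with Lemma \ref{asia} for the lower bounds. The only cosmetic difference is in the upper bound, where you use the explicit witness $u=(x_1\cdots x_{t+1})^{t-1}(x_{t+2}\cdots x_n)^t$ with $(J_{n,n-1}^t:u)=(x_1,\ldots,x_{t+1})$, while the paper instead applies Lemma \ref{lem} along the same colon chain, via $\depth(S/J_{n,n-1}^t)\leq \depth(S/(J_{n,n-1}^t:x_n^t))=n-t-1$; both are applications of Lemma \ref{lem} to a colon ideal.
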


\begin{proof}
Since $d=\gcd(n,m)=\gcd(n,n-1)=1$, it follows that $t_0:=t_0(n,n-1)=n-1$, since $mt_0=(n-1)^2 = (n-2)n+1 =\alpha n + d$.
Therefore, according to Theorem \ref{t2}(1), the conclusion follows for $t\geq n-1$.
Now, assume $t\leq n-2$.

It $n=3$, then $t=1$ and it is an easy exercise to show that
$$\sdepth(S/J_{3,2})=\depth(S/J_{3,2})=1=n-t-1.$$
Now, assume $n\geq 4$ and $t\leq n-2$. We consider the ideals
$$L_j:=(J_{n,n-1}^t:x_n^j)\text{ for }0\leq j\leq t.$$
By straightforward computations, we have
\begin{equation}\label{lj}
L_j=J_{n,n-1}^{t-j}(J_{n-1,n-2}^jS)\text{ for }0\leq j\leq t\text{ and }
\end{equation}
\begin{equation}\label{ljxn}
(L_j,x_n) = ((x_1\cdots x_{n-1})^{t-j}(J_{n-1,n-2}^jS),x_n)\text{ for }0\leq j\leq t-1.
\end{equation}
We consider the short exact sequences
$$ 0 \to S/L_1 \to S/L_0 \to S/(L_0,x_n) \to 0 $$
$$0 \to S/L_2 \to S/L_1 \to S/(L_1,x_n) \to 0 $$
$$ \vdots $$
\begin{equation}\label{iegzacte}
0 \to S/L_t \to S/L_{t-1} \to S/(L_{t-1},x_n) \to 0.
\end{equation}
From \eqref{ljxn}, the induction hypothesis and Lemma \ref{lemm} it follows that
\begin{equation}\label{cucu1}
\sdepth(S/(L_j,x_n))=\depth(S/(L_j,x_n))=n-j-2\text{ for all }0\leq j\leq t-1.
\end{equation}
On the other hand, from \eqref{lj} we have $L_t=(J_{n-1,n-2}^tS)$. Hence, from induction hypothesis
and Lemma \ref{lhvz} it follows that
\begin{equation}\label{cucu2}
\sdepth(S/L_t)=\depth(S/L_t)=(n-1-t-1)+1 = n-1-t.
\end{equation}
From \eqref{cucu1},\eqref{cucu2} and the short exact sequences \eqref{iegzacte} we deduce
inductively that
\begin{equation}\label{cucu3}
\sdepth(S/L_j),\depth(S/L_j)\geq n-1-t\text{ for all }0\leq j\leq t-1.
\end{equation} 
On the other hand, from Lemma \ref{lem} we have
\begin{equation}\label{cucu4}
\depth(S/L_0)\leq \depth(S/L_t)\text{ and }\sdepth(S/L_0)\leq \sdepth(S/L_t).
\end{equation}
Since $L_0=J_{n,n-1}^t$, from \eqref{cucu2},\eqref{cucu3} and \eqref{cucu4} it follows
that $$\sdepth(S/J_{n,n-1}^t)=\depth(S/J_{n,n-1}^t)=n-t-1,$$
which completes the proof.
\end{proof}

\begin{obs}\rm
Let $S_{n+m-1}:=K[x_1,x_2,\ldots,x_{n+m-1}]$. We note that
\begin{equation}\label{izo}
 \frac{S_{n+m-1}}{(I_{n+m-1,m}^t,x_1-x_{n+1},x_2-x_{n+2},\ldots,x_{m-1}-x_{n+m-1})} \cong \frac{S}{J_{n,m}^t}.
\end{equation}
Assume that $m=n-1$. It is not difficult to see that
\begin{equation}\label{claim}
x_1-x_{n+1},x_2-x_{n+2},\ldots,x_{n-2}-x_{2n-2}\text{ is a short exact sequence on }S_{2n-2}/I_{2n-2,n-1}^t.
\end{equation}
From \eqref{claim}, Lemma \ref{freg}, \eqref{izo} and Theorem \ref{depth} it follows that
$$\depth(S/J_{n,n-1}^t)=\depth(S_{2n-2}/I_{2n-2,n-1}^t)=\varphi(2n-2,n-1,t)-n+2,$$
from where we easily deduce the required formula given in Theorem \ref{t1} for $\depth(S/J_{n,n-1}^t)$.
Unfortunately, this method is not useful in the computation of $\sdepth(S/J_{n,n-1}^t)$.

We also mention that the sequence $x_1-x_{n+1},x_2-x_{n+2},\ldots,x_{n-2}-x_{2n-2}$ is not regular when $n>m+1$,
therefore we cannot use \eqref{izo} in order to compute (or at least to give some bounds for) $\depth(S/J_{n,m}^t)$.
\end{obs}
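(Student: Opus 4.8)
The plan is to establish the two displayed assertions of the remark separately — first the ring isomorphism \eqref{izo}, then the regular‑sequence claim \eqref{claim} — and only afterwards to read off the depth formula. First I would treat \eqref{izo} by exhibiting the substitution homomorphism $\pi\colon S_{n+m-1}\to S$ determined by $x_j\mapsto x_j$ for $1\le j\le n$ and $x_{n+i}\mapsto x_i$ for $1\le i\le m-1$; its kernel is exactly $(x_1-x_{n+1},\dots,x_{m-1}-x_{n+m-1})$, so $\pi$ induces an isomorphism $\bar\pi\colon S_{n+m-1}/(x_1-x_{n+1},\dots,x_{m-1}-x_{n+m-1})\stackrel{\sim}{\longrightarrow}S$. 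The only thing to check is that $\bar\pi$ carries the image of $I_{n+m-1,m}$ onto $J_{n,m}$: a generator $x_j\cdots x_{j+m-1}$ with $1\le j\le n-m+1$ is fixed and lands in $I_{n,m}$, while for $n-m+2\le j\le n$ the top indices $n+1,\dots,n+m-1$ are folded back by $\pi$ to $1,\dots,m-1$, producing exactly the wrap‑around generators $x_{n-m+2}\cdots x_nx_1,\dots,x_nx_1\cdots x_{m-1}$ of $J_{n,m}$. Since a ring isomorphism preserves products of ideals, $\bar\pi$ sends the image of $I_{n+m-1,m}^t$ to $J_{n,m}^t$, which is \eqref{izo}.

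For \eqref{claim} the crucial preliminary, valid for every $t$, is to convert the regularity of the linear forms into a purely monomial statement. Writing $\theta_i=x_i-x_{n+i}$, I note that the $2(n-2)$ indices $1,\dots,n-2,n+1,\dots,2n-2$ are pairwise distinct, so after killing $\theta_1,\dots,\theta_{i-1}$ (i.e. eliminating $x_{n+1},\dots,x_{n+i-1}$) the quotient is $S^{(i-1)}/\bar I_{i-1}^{\,t}$, where $S^{(i-1)}=K[x_1,\dots,x_n,x_{n+i},\dots,x_{2n-2}]$ and $\bar I_{i-1}$ is the monomial ideal obtained from $I_{2n-2,n-1}$ by the identifications $x_{n+1}=x_1,\dots,x_{n+i-1}=x_{i-1}$ (no squares arise, since no window $\{j,\dots,j+n-2\}$ contains two indices differing by $n$). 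Under this isomorphism $\theta_i$ becomes the difference $x_i-x_{n+i}$ of two distinct variables of $S^{(i-1)}$, and because the associated primes of a monomial ideal are monomial, $x_i-x_{n+i}$ is a nonzerodivisor on $S^{(i-1)}/\bar I_{i-1}^{\,t}$ if and only if no associated prime of $\bar I_{i-1}^{\,t}$ contains both $x_i$ and $x_{n+i}$. Thus \eqref{claim} is equivalent to this co‑occurrence condition for all $i=1,\dots,n-2$.

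To prove the co‑occurrence condition I would exploit the separation special to $m=n-1$: in $\bar I_{i-1}$ the variable $x_i$ survives only in the windows $\bar g_1,\dots,\bar g_i$ and $x_{n+i}$ only in $\bar g_{i+2},\dots,\bar g_n$, the two families being separated by the middle window $\bar g_{i+1}$, which contains neither. At $t=1$ this settles everything: $\bar I_{i-1}$ is squarefree, its associated primes are the minimal transversals of the windows, and a transversal keeping both $x_i$ and $x_{n+i}$ necessary would have to hit $\bar g_{i+1}$ by a third variable, while the minimality of $x_i$ and $x_{n+i}$ empties precisely the set of vertices available for that (for $i=1$ one checks directly that forcing $x_1$ and $x_{n+1}$ necessary leaves $\bar g_2=\{x_2,\dots,x_n\}$ uncovered), a contradiction. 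The hard part will be upgrading this from $t=1$ to arbitrary $t$: I must show that the embedded primes of $\bar I_{i-1}^{\,t}$ inherit the same separation, which I would attempt by analysing the colon ideals $(\bar I_{i-1}^{\,t}:u)$ and proving that the generators forcing $x_i$ into such a colon come only from $\bar g_1,\dots,\bar g_i$ while those forcing $x_{n+i}$ come only from $\bar g_{i+2},\dots,\bar g_n$, so the two demands are met independently across $\bar g_{i+1}$; as a safety net one may instead verify the Hilbert‑series identity $\operatorname{Hilb}_{S/J_{n,n-1}^t}=(1-z)^{n-2}\operatorname{Hilb}_{S_{2n-2}/I_{2n-2,n-1}^t}$, whose coefficientwise $\preceq$ direction is automatic and whose reverse direction is equivalent to regularity. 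The same reduction explains the failure for $n>m+1$: there the middle range $\bar g_2,\dots,\bar g_{n+1-m}$ is nonempty, so $\{x_1,x_{n+1}\}$ extends to a genuine minimal transversal — for instance, for $n=4,\,m=2$ the cover $(x_1,x_3,x_5)$ of the path $P_5$ contains both $x_1$ and $x_{n+1}=x_5$ — making $\theta_1$ already a zerodivisor, so \eqref{izo} cannot be used to compute depth.

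Finally, granting \eqref{izo} and \eqref{claim}, the depth formula follows mechanically. Applying Lemma \ref{freg} to the $n-2$ regular forms gives $\depth(S/J_{n,n-1}^t)=\depth(S_{2n-2}/I_{2n-2,n-1}^t)-(n-2)$, and Theorem \ref{depth} evaluates the right‑hand side as $\varphi(2n-2,n-1,t)-n+2$. A short case check (here $N-t+2=2n-t$ and $m+1=n$) yields $\varphi(2n-2,n-1,t)=2n-t-3$ for $1\le t\le n-1$ and $\varphi(2n-2,n-1,t)=n-2$ for $t\ge n$, hence $\depth(S/J_{n,n-1}^t)=\max\{n-t-1,0\}$, recovering the depth half of Theorem \ref{t1}. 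I would close by stressing that this route is silent about $\sdepth$: Lemma \ref{asia} only provides the inequality $\sdepth(M)\ge\min\{\sdepth(U),\sdepth(N)\}$, and since there is no analogue of Lemma \ref{freg} for Stanley depth, a regular element need not lower $\sdepth$ by one — which is exactly why the remark notes the method fails for the Stanley depth.
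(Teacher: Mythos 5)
Your route coincides with the paper's own: the remark in the paper rests on exactly the same three ingredients you use, namely the substitution isomorphism \eqref{izo}, the regularity claim \eqref{claim}, and the combination of Lemma \ref{freg} with Theorem \ref{depth}, followed by the evaluation $\varphi(2n-2,n-1,t)=2n-t-3$ for $t\leq n-1$ and $\varphi(2n-2,n-1,t)=n-2$ for $t\geq n$, which recovers the depth half of Theorem \ref{t1}. Your verification of \eqref{izo} via the retraction $\pi$ is correct, your $\varphi$-bookkeeping is correct, and your concrete witness for the failure when $n>m+1$ (the minimal vertex cover $(x_1,x_3,x_5)$ of $I_{5,2}$, so that $x_1-x_5$ lies in an associated prime) is a genuine improvement on the paper, which merely asserts non-regularity. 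Your reduction of \eqref{claim} to the statement that, after eliminating $x_{n+1},\ldots,x_{n+i-1}$, no associated prime of the folded ideal $\bar I_{i-1}^{\,t}$ contains both $x_i$ and $x_{n+i}$ is also sound (associated primes of monomial ideals are monomial, and no window contains two indices differing by $n$, so the folding stays squarefree), and your $t=1$ argument via minimal covers and the middle window $\bar g_{i+1}$ checks out.

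There is, however, a genuine gap at the central point: for $t\geq 2$ you prove nothing. You explicitly defer the hard step (``I would attempt\dots'', ``as a safety net one may instead verify\dots''), and the entire content of \eqref{claim} lives precisely there, since $\Ass(S/\bar I_{i-1}^{\,t})$ acquires embedded primes for $t\geq 2$ and your separation argument constrains only the minimal primes, which do not change with $t$. Neither proposed repair is executed: the colon-ideal analysis of $(\bar I_{i-1}^{\,t}:u)$ is not carried out, and the Hilbert-series criterion, while valid in principle, would require an independent computation of the Hilbert series of $S/J_{n,n-1}^t$, which you do not supply. To be fair, the paper itself dismisses \eqref{claim} with ``it is not difficult to see,'' so your attempt is already more detailed than the published text at the same spot; but as a proof it is incomplete. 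A workable completion: for a monomial ideal $L$, the form $x_a-x_b$ is regular on $S/L$ if and only if there is no monomial $u\notin L$ with $x_au\in L$ and $x_bu\in L$; one can then argue for all $t$ by an exponent-counting argument exploiting the rigid degree structure of the generators of the folded path ideal, in the same spirit as the counting \eqref{cooroo} in the proof of Lemma \ref{lemmy}, showing that $x_iu,x_{n+i}u\in \bar I_{i-1}^{\,t}$ already forces $u\in \bar I_{i-1}^{\,t}$. Until such an argument is supplied for arbitrary $t$, the depth formula you derive rests on an unproven claim.
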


\begin{lema}\label{lemoasa}
Let $m,t\geq 2$ and $n\geq mt-1$ be some integers and $L=(J_{n,m}^t:(x_1x_2\cdots x_{mt-1}))$. We have that:
\begin{enumerate}
\item[(1)] If $n=mt-1$ then $L=\mathfrak m=(x_1,x_2,\ldots,x_n)$.
\item[(2)] If $n=mt$ then $L=(x_m,x_{2m},\ldots,x_{mt})$.
\item[(3)] It $mt<n\leq m(t+1)$ then $L=(x_m,x_{2m},\ldots,x_{mt},x_n)$.
\item[(4)] If $n>m(t+1)$ then $L=(x_m,\ldots,x_{mt},x_n)+V$, where 
$$V=(x_{mt+1}\cdots x_{mt+m},x_{mt+2}\cdots x_{mt+m+1},\ldots,x_{n-m}\cdots x_{n-1})^t\subset K[x_{mt+1},\ldots,x_{n-1}].$$
Moreover, $V\cong I_{n-mt-1,m}^t$.
\end{enumerate}
\end{lema}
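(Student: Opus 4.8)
The plan is to compute $L$ directly from generators. Write $u=x_1\cdots x_{mt-1}$ and let $A_i=x_ix_{i+1}\cdots x_{i+m-1}$ (indices modulo $n$) be the arc generators of $J_{n,m}$, so that $J_{n,m}^t$ is generated by the products $g=A_{i_1}\cdots A_{i_t}$ of $t$ arcs. For a monomial ideal $(g_\lambda)_\lambda$ one has $((g_\lambda)_\lambda:u)=\sum_\lambda\bigl(g_\lambda/\gcd(g_\lambda,u)\bigr)$, hence $L=\sum_g\bigl(g/\gcd(g,u)\bigr)$, the sum running over all products $g$ of $t$ arcs. If $c_i$ denotes the number of arcs of $g$ covering position $i$ (so $\sum_ic_i=mt$), then
$$g/\gcd(g,u)=\prod_{i=1}^{mt-1}x_i^{\max\{c_i-1,0\}}\cdot\prod_{i=mt}^{n}x_i^{c_i},$$
so the lemma reduces to a combinatorial statement about covering the cycle $\mathbb Z/n\mathbb Z$ by $t$ arcs of length $m$.

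For the inclusion $\supseteq$ I would realize each claimed generator by an explicit $g$. For $x_{jm}$ with $1\le j\le t$, take $A_1,A_{m+1},\dots,A_{(j-1)m+1}$ followed by $A_{jm},A_{jm+m},\dots,A_{(t-1)m}$; these tile $\{1,\dots,mt-1\}$ with position $jm$ doubled (for $j=t$ the position $mt$ simply lies outside $\supp(u)$). For $x_n$ (when $n>mt$) take $A_n=x_nx_1\cdots x_{m-1}$ together with $A_m,A_{2m},\dots,A_{(t-1)m}$. For each inner arc $A_\ell$ with $\supp(A_\ell)\subseteq\{mt+1,\dots,n-1\}$, take $A_\ell$ together with $A_1,A_{m+1},\dots,A_{(t-2)m+1}$. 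Dividing each such $g$ by $\gcd(g,u)$ returns exactly the required monomial.

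The heart is $\subseteq$. The geometric key is that $m$ cyclically consecutive positions contain exactly one multiple of $m$, so each arc meets the marked set $\{m,2m,\dots,mt\}$ in at most one point, and in none only if it covers $n$ or lies inside the gap $\{mt+1,\dots,n-1\}$ (which can happen only when $n>m(t+1)$). I then argue on $g$: if some arc covers $n$ or $mt$ then $x_n$ or $x_{mt}$ divides $g/\gcd(g,u)$. Otherwise, in cases (2)--(3) the gap is shorter than $m$, so each of the $t$ arcs hits one of the $t-1$ points $m,2m,\dots,(t-1)m$, and pigeonhole forces $c_{jm}\ge2$ for some $j$, i.e.\ $x_{jm}\mid g/\gcd(g,u)$. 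Case (1) is handled more cleanly by symmetry: $u=x_1\cdots x_n$ and $J_{n,m}$ are invariant under the cyclic shift, and $x_1u=A_1A_{m+1}\cdots A_{(t-1)m+1}\in J_{n,m}^t$ (the $t$ arcs wrap once around the cycle of length $n=mt-1$, doubling position $1$), so $\mathfrak m\subseteq L$, while $u\notin J_{n,m}^t$ for degree reasons gives $L=\mathfrak m$.

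The main obstacle is case (4), where arcs may lie in the gap. I would split the arcs of $g$ into outer arcs (covering $n$ or a marked point) and inner arcs (contained in $\{mt+1,\dots,n-1\}$): if an outer arc covers $n$ or $mt$, or if two outer arcs repeat a marked point, we land in $(x_m,\dots,x_{mt},x_n)$ as above; otherwise the outer arcs contribute nothing and $g/\gcd(g,u)$ is exactly the product of the inner arcs, each of which is a generator of the path ideal on $x_{mt+1},\dots,x_{n-1}$. Here lies the delicate point I expect to need care: the constructions above already show that a \emph{single} inner arc multiplies $u$ into $J_{n,m}^t$, so the inner contribution is the first power $V\cong I_{n-mt-1,m}$, and one must verify that no product mixing inner and outer arcs yields a monomial outside $(x_m,\dots,x_{mt},x_n)+V$.
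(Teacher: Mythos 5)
Your handling of items (1)--(3) is correct and is essentially the paper's own argument: the paper proves the inclusions $\supseteq$ by exactly the same explicit tilings (e.g.\ $x_m\cdot(x_1\cdots x_{mt-1})=(x_1\cdots x_m)(x_m\cdots x_{2m-1})\cdots(x_{mt-m}\cdots x_{mt-1})$), handles case (1) by the same cyclic-symmetry-plus-degree argument, and disposes of the reverse inclusions with an ``it is easy to see''; your covering-multiplicity count $c_i$ is precisely the bookkeeping needed to make that rigorous.

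In case (4), however, what you call a ``delicate point I expect to need care'' is in fact a refutation of the statement as printed, and you should have said so. Your own construction --- an inner arc $A_\ell$ together with $A_1,A_{m+1},\ldots,A_{(t-2)m+1}$ --- shows that every \emph{single} generator of the gap path ideal lies in $L$. But for $t\geq 2$ such a monomial is squarefree of degree $m$ and coprime to $x_m,\ldots,x_{mt},x_n$, so it cannot lie in $(x_m,\ldots,x_{mt},x_n)+V$ when $V$ is the $t$-th power, whose generators have degree $mt>m$. Concretely, for $n=7$, $m=2$, $t=2$: $(x_1x_2)(x_5x_6)$ divides $x_1x_2x_3\cdot x_5x_6$, hence $x_5x_6\in(J_{7,2}^2:x_1x_2x_3)$, yet $x_5x_6\notin(x_2,x_4,x_7)+(x_5x_6)^2$. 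Since inner arcs exist exactly when $n>m(t+1)$, item (4) fails for \emph{all} parameters it covers; the correct statement has $V$ equal to the \emph{first} power of the gap path ideal, i.e.\ $V\cong I_{n-mt-1,m}$. With that correction your inner/outer split does close the proof: if $vu\in J_{n,m}^t$ and $v$ is divisible by none of $x_m,\ldots,x_{mt},x_n$, then in a product $g$ of $t$ arcs dividing $vu$ no arc may cover $mt$ or $n$ and no marked point may be covered by two arcs (the multiplicities in $vu$ forbid it), so the outer arcs number at most $t-1$; hence some arc of $g$ is inner, and since $u$ involves no gap variable, that single inner arc divides $v$. The outer-arc overlaps you worried about are harmless: they only add extra factors to $g/\gcd(g,u)$, which cannot remove it from the ideal.

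For comparison, the paper's own proof of (4) commits exactly the error you uncovered: it rests on the display $(J_{n,m}^t,x_m,\ldots,x_{mt})=(x_m,\ldots,x_{mt},x_n)+V$, whose left-hand side does not even contain $x_n$, and the passage from such an identity to the colon ideal is unjustified --- the colon absorbs $u$, which can pay for up to $t-1$ outer arcs, so only one inner arc is forced rather than $t$. The downstream damage is limited: in case (iii) of the proof of Theorem \ref{t5} only the inequality $\depth(S/L_{tm-1})\geq\varphi(n-1,m,t)$ is used, and it survives the correction because $\varphi(n-mt-1,m,1)\geq\varphi(n-mt-1,m,t)$.
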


\begin{proof}
(1)
As in the proof of Lemma \ref{lemmy}, we use the convention 
$$j=n+j=2n+j=\cdots,\text{ for all }1\leq j\leq n.$$ 
We fix $1\leq i\leq n$ and we define inductively the monomials $u_1:=x_ix_{i+1}\cdots x_{i+m-1}$ and $u_k:=x_{m_k+1}x_{m_k+2}\cdots x_{m_k+m}$,
where $m_1=i$ and $m_k=m_{k-1}+m$, for $2\leq k\leq t$. Obviously, $u_k\in G(J_{n,m})$ for all $1\leq k\leq t$, thus 
$u_1u_2\cdots u_t\in G(J_{n,m}^t)$. On the other hand, it is easy to see that 
$$x_i\cdot (x_1x_2\cdots x_{mt-1})=u_1u_2\cdots u_t.$$
Therefore, $x_i\in L$. Since $i$ was arbitrarily chosen, it follows that $\mathfrak m\subset L$.
Obviously, $L\neq S$, since $x_1x_2\cdots x_{mt-1}\notin J_{n,m}^t$. Therefore $L=\mathfrak m$, as required.

(2) Similarly to (1), we can deduce that $x_m,x_{2m},\ldots,x_{mt}\in L$. For instance, we have
$$x_m\cdot (x_1x_2\cdots x_{mt-1})=(x_1\cdots x_m)(x_m\cdots x_{2m-1})(x_{2m}\cdots x_{3m-1})\cdots(x_{mt-m}\cdots x_{mt-1})\in J_{n,m}^t.$$
Also, it is easy to see that 
$x_j\notin L$ for any $j\notin\{m,2m,\ldots,mt\}$. Since $$(J_{n,m}^t,x_m,x_{2m},\ldots,x_{mt})=(x_m,\ldots,x_{mt}),$$
the conclusion follows immediately.

(3) The proof is similar to the proof of $(2)$, with the remark that $x_n\in L$, since
$$x_n(x_1x_2\cdots x_{mt-1})=(x_nx_1\cdots x_{m-1})(x_m\cdots x_{2m-1})\cdots (x_{(t-1)m}\cdots x_{tm-1})\in J_{n,m}^t.$$

(4) As in the previous cases, it is easy to see that $(x_m,\ldots,x_{mt},x_n)\subset L$ and $x_j\notin L$ for any
$j\notin\{m,\ldots,mt,n\}$. Also, using similar arguments as in the proof of Lemma \ref{inmt}, we deduce that
$$(J_{n,m}^t,x_m,x_{2m},\ldots,x_{mt}) = (x_m,\ldots,x_{mt},x_n)+V.$$
Hence, we get the required conclusion.
\end{proof}

Using the above lemma, we are able to prove the following result:

\begin{teor}\label{t5}
Let $m,t\geq 2$ and $n\geq mt-1$ be some integers. We have that:
\begin{enumerate}
\item[(1)] If $n=mt-1$ then $\sdepth(S/J_{n,m}^s)=\depth(S/J_{n,m}^s)=0$ for all $s\geq t$.
\item[(2)] If $n\geq mt$ then $\sdepth(S/J_{n,m}^t)\geq \varphi(n-1,m,t)$.
\item[(3)] If $n\geq mt$ then $\varphi(n-1,m,t)+1\geq \depth(S/J_{n,m}^t)\geq \varphi(n-1,m,t)$.
\end{enumerate}
\end{teor}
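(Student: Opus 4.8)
The plan is to separate part (1), which is an associated-prime statement resting directly on Lemma \ref{lemoasa}(1), from parts (2) and (3), which are depth and Stanley-depth lower bounds obtained by interpolation along a chain of colon ideals.

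For (1), with $n=mt-1$, Lemma \ref{lemoasa}(1) gives $(J_{n,m}^t:x_1x_2\cdots x_{mt-1})=\mathfrak m$, so $\mathfrak m\in\Ass(S/J_{n,m}^t)$ and Lemma \ref{lem7} forces $\depth=\sdepth=0$ when $s=t$. To reach an arbitrary $s\geq t$ I would colon by $w_s:=(x_1\cdots x_{mt-1})(x_1\cdots x_m)^{s-t}$, which has degree $ms-1$ and hence lies outside $J_{n,m}^s$; since $x_j(x_1\cdots x_{mt-1})\in J_{n,m}^t$ for every $j$, also $x_jw_s\in J_{n,m}^tJ_{n,m}^{s-t}=J_{n,m}^s$, so $(J_{n,m}^s:w_s)=\mathfrak m$ and Lemma \ref{lem7} again gives the conclusion. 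This is the same reduction already used in Lemma \ref{lemmy}(1).

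For (2) and (3) assume $n\geq mt$. The upper bound $\depth(S/J_{n,m}^t)\leq\varphi(n-1,m,t)+1$ is Theorem \ref{t3}(1), so only the two lower bounds remain, and I would prove them simultaneously. The idea is to walk along the chain of colon ideals $(J_{n,m}^t:x_1\cdots x_k)$ for $k=0,\ldots,mt-1$, which starts at $J_{n,m}^t$ and ends, by Lemma \ref{lemoasa}, at the fully described ideal $L=(J_{n,m}^t:x_1\cdots x_{mt-1})$. Each one-variable step sits in a short exact sequence
$$0\to S/(J_{n,m}^t:x_1\cdots x_{k+1})\to S/(J_{n,m}^t:x_1\cdots x_k)\to S/\big((J_{n,m}^t:x_1\cdots x_k),x_{k+1}\big)\to 0,$$
to which I apply the Depth Lemma \ref{l11}(1) and Rauf's Lemma \ref{asia}. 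Thus if every cokernel and the endpoint $S/L$ have depth and Stanley depth at least $\varphi(n-1,m,t)$, a downward induction on $k$ propagates the bound back to $k=0$ and yields $\depth(S/J_{n,m}^t),\sdepth(S/J_{n,m}^t)\geq\varphi(n-1,m,t)$.

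The endpoint is handled directly from Lemma \ref{lemoasa}: in cases (2) and (3) the ideal $L$ is a complete intersection on a subset of the variables, so $S/L$ is a polynomial ring of depth and Stanley depth $n-t$ or $n-t-1$, while in case (4) it is such a complete intersection together with a shifted copy of $I_{n-mt-1,m}^t$ supported on disjoint variables, whose depth and Stanley depth equal $t(m-1)+\varphi(n-mt-1,m,t)$ by Theorem \ref{depth} and Lemma \ref{lhvz}; in each case an elementary comparison shows this is at least $\varphi(n-1,m,t)$. The genuinely hard part, and where I expect the bulk of the bookkeeping to lie, is the uniform estimate on the intermediate cokernels $S/\big((J_{n,m}^t:x_1\cdots x_k),x_{k+1}\big)$: for $k=0$ this is $S/(J_{n,m}^t,x_1)$, which by the cycle-breaking identity behind Lemma \ref{inmt} is isomorphic to $S_{n-1}/I_{n-1,m}^t$ and so has depth exactly $\varphi(n-1,m,t)$ by Theorem \ref{depth}, but for general $k$ one must identify each such quotient with a quotient by a monomial ideal built from a path ideal on fewer variables together with free variables, and verify that its depth and Stanley depth never drop below $\varphi(n-1,m,t)$. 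Controlling this whole family of modules is the main obstacle; once it is in place, the chain argument closes parts (2) and (3).
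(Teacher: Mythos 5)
Your overall architecture coincides with the paper's: part (1) is argued exactly as in the paper (colon by $w_s$, note $w_s\notin J_{n,m}^s$ by degree, conclude with Lemma \ref{lem7}), and for (2)--(3) the paper uses precisely your chain $L_j=(J_{n,m}^t:x_1\cdots x_j)$, $0\leq j\leq mt-1$, the short exact sequences $0\to S/L_j\to S/L_{j-1}\to S/(L_{j-1},x_j)\to 0$, the endpoint computation via Lemma \ref{lemoasa} (with the same three cases and the same values $n-t$, $n-t-1$ and $(m-1)t+\varphi(n-mt-1,m,t)$), and the Depth Lemma \ref{l11} together with Rauf's Lemma \ref{asia} to propagate the bound back to $L_0$; the upper bound in (3) is quoted from Theorem \ref{t3}(1) in both. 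However, your proposal stops short at exactly the step you yourself flag as ``the main obstacle'': the uniform bound $\depth,\sdepth\geq\varphi(n-1,m,t)$ for the intermediate cokernels $S/(L_{j-1},x_j)$. You settle only $j=1$ and anticipate that general $j$ requires identifying each cokernel structurally as a path-type ideal plus free variables. As written, this is a genuine gap, not a routine verification, and it is where the actual content of the proof lies.

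The paper closes this gap without any structural identification, by commuting the colon with the addition of the variable: since $x_j$ does not divide $x_1\cdots x_{j-1}$, one has
\begin{equation*}
U_j=(L_{j-1},x_j)=\bigl((J_{n,m}^t:x_1\cdots x_{j-1}),x_j\bigr)=\bigl((J_{n,m}^t,x_j):x_1\cdots x_{j-1}\bigr),
\end{equation*}
and then Lemma \ref{inmt}, applied after the circular permutation sending $j$ to $n$, gives $(J_{n,m}^t,x_j)\cong(I_{n-1,m}^t,x_n)$, whence $S/U_j\cong S'/(I_{n-1,m}^t:x_{n-j+1}\cdots x_{n-1})$ with $S'=K[x_1,\ldots,x_{n-1}]$. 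Now Lemma \ref{lem} (colon ideals never decrease depth or Stanley depth; applicable because $x_{n-j+1}\cdots x_{n-1}$ has degree $j-1<mt$, hence lies outside $I_{n-1,m}^t$) together with Theorem \ref{depth} immediately yields $\depth(S/U_j)\geq\varphi(n-1,m,t)$ and $\sdepth(S/U_j)\geq\varphi(n-1,m,t)$ for every $j$, with no bookkeeping at all. The point is the order of operations: instead of first describing the colon ideal $L_{j-1}$ and then adding $x_j$, add $x_j$ first (which breaks the cycle into a path, where everything is known) and take the colon afterwards; the effect of the colon is then absorbed by Lemma \ref{lem} rather than computed. Inserting this identity in place of your ``obstacle'' paragraph makes your argument complete, and it then coincides with the paper's proof.
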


\begin{proof}
Assume $n=mt-1$. From Lemma \ref{lemoasa}(1) it follows that
\begin{equation}\label{ohio}
(J_{n,m}^t:w)=\mathfrak m,\text{ where }w:=x_1x_2\cdots x_n.
\end{equation}
Let $w_s=w\cdot (x_1\cdots x_m)^{s-t}$. Since $w_s\notin J_{n,m}^s$, from \eqref{ohio} it follows that
$$(J_{n,m}^s:w_s)=\mathfrak m.$$
Therefore, $\mathfrak m\in \Ass(S/J_{n,m}^s)$ and (1) follows from Lemma \ref{lem7}.

Now, assume $n\geq mt$. 
Let $L_0=J_{n,m}^t$, $L_j:=(L_0:x_1\cdots x_j)$, for $1\leq j\leq mt-1$, and $U_j=(L_{j-1},x_j)$, for $1\leq j\leq mt-1$.
We consider the short exact sequences
\begin{equation}\label{cia1}
0 \to S/L_j \to S/L_{j-1} \to S/U_j \to 0,\text{ for }1\leq j\leq mt-1.
\end{equation}
Note that, according to Lemma \ref{inmt}, we have that
\begin{equation}\label{cia2}
(L_0,x_j)\cong (I_{n-1,m}^t,x_n)\text{ for all }1\leq j\leq mt-1,
\end{equation}
where $I_{n-1,m}^t \subset S'=K[x_1,\ldots,x_{n-1}]$ and the isomorphism is given by the circular permutation of variables which send $j$ to $n$.
On the other hand, we have
$$U_j=(L_{j-1},x_j)=((J_{n,m}^t:x_1\ldots x_{j-1}),x_j)=$$
\begin{equation}\label{cia3}
=((J_{n,m}^t,x_j):x_1\cdots x_{j-1})\cong (I_{n-1,m}^t:x_{n-j+1}\cdots x_{n-1}),
\end{equation}
for all $1\leq j\leq mt-1$. From \eqref{cia2}, \eqref{cia3}, Lemma \ref{lem} and Theorem \ref{depth} it follows that
\begin{align}\label{nsa}
& \depth(S/U_j)\geq \depth(S'/I_{n-1,m}^t)=\varphi(n-1,m,t)\text{ and }\\
& \sdepth(S/U_j)\geq \sdepth(S'/I_{n-1,m}^t)\geq \varphi(n-1,m,t).\label{nsaa}
\end{align}
Also, from Lemma \ref{lem}, we have that
\begin{align}\label{nsa2}
& \depth(S/L_0)\leq \depth(S/L_1)\leq \cdots \leq \depth(S/L_{tm-1})\text{ and }\\
& \sdepth(S/L_0)\leq \sdepth(S/L_1)\leq \cdots \leq \sdepth(S/L_{tm-1}).\label{nsaa2}
\end{align}
We consider three cases:
\begin{enumerate}
\item[(i)] If $n=mt$ then, according to Lemma \ref{lemoasa}(2), it follows that
$$
\sdepth(S/L_{tm-1})=\depth(S/L_{tm-1})=n-t.
$$
\item[(ii)] If $mt<n\leq m(t+1)$ then, according to Lemma \ref{lemoasa}(3), it follows that
$$
\sdepth(S/L_{tm-1})=\depth(S/L_{tm-1})=n-t-1.
$$
\item[(iii)] If $n>m(t+1)$ then, according to Lemma \ref{lemoasa}(4), it follows that
$$
\sdepth(S/L_{tm-1})\geq \depth(S/L_{tm-1})=(m-1)t+\varphi(n-mt-1,m,t).
$$
\end{enumerate}
In all of the above cases (i), (ii) and (iii), it is easy to see that the following inequalities hold:
\begin{equation}\label{nsa3}
\sdepth(S/L_{tm-1})\geq \depth(S/L_{tm-1})\geq \varphi(n-1,m,t).
\end{equation}
From \eqref{nsa}, \eqref{nsaa}, \eqref{nsa2}, \eqref{nsaa2}, \eqref{nsa3}, the short exact sequences \eqref{cia1}, Lemma \ref{l11} and Lemma \ref{asia} it follows that
$$\depth(S/J_{n,m}^t)\geq \varphi(n-1,m,t)\text{ and }\sdepth(S/J_{n,m}^t)\geq \varphi(n-1,m,t).$$
Also, from Theorem \ref{t3}(1), it follows that $\depth(S/J_{n,m}^t)\leq \varphi(n-1,m,t)+1$.
Thus, we complete the proof of (2) and (3).
\end{proof}

\begin{obs}\rm
Note that, in the case (1) of Theorem \ref{t5} we have that $d=\gcd(n,m)=1$. 
However, the result is stronger than the result from Theorem \ref{t2}(1), since $t_0=n-1$ is 
larger than $t=\frac{n-1}{m}$.
\end{obs}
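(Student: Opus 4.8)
The observation packages two elementary facts about the regime $n=mt-1$ of Theorem \ref{t5}(1): first that $d=\gcd(n,m)=1$, and second that the exponent threshold $t$ appearing there is no larger than the threshold $t_0$ that governs Theorem \ref{t2}(1). Together these say that Theorem \ref{t5}(1) delivers the vanishing $\sdepth(S/J_{n,m}^s)=\depth(S/J_{n,m}^s)=0$ over a range of exponents $s$ that contains the range provided by Theorem \ref{t2}(1); this containment of ranges is the comparison recorded in the observation. I would prove the two facts in turn.

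For $d=1$ the plan is a one-line computation: since $n=mt-1$ we have $\gcd(n,m)=\gcd(mt-1,m)$, and any common divisor of $mt-1$ and $m$ divides their difference $mt-(mt-1)=1$; hence $d=1$, so that Theorem \ref{t2}(1) is indeed applicable in this regime.

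For the threshold comparison, recall that $t_0$ is the largest integer with $t_0\le n-1$ for which some positive integer $\alpha$ satisfies $mt_0=\alpha n+d=\alpha n+1$. The plan is to verify that the exponent $t=\frac{n+1}{m}$ of Theorem \ref{t5}(1) is itself an admissible value in this maximization. Indeed $mt=n+1=1\cdot n+1$, so $t$ realizes the defining relation with $\alpha=1$; moreover $m,t\ge 2$ force $n=mt-1\ge 3$, whence $t=\frac{n+1}{m}\le\frac{n+1}{2}\le n-1$, so $t$ lies in the allowed range. By the maximality in the definition of $t_0$ we conclude $t\le t_0\le n-1$, and therefore $\{s:\ s\ge t\}\supseteq\{s:\ s\ge t_0\}$, which is the asserted comparison of thresholds.

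The one delicate point is that $t_0$ is specified only implicitly, through the congruence $mt_0\equiv 1\pmod n$ (equivalently $\widehat{t_0}=\widehat m^{-1}$ in $\mathbb Z/n\mathbb Z$, as in the proof of Lemma \ref{lemmy}(1)). I would emphasize that the comparison uses only the easy half of this description: one never needs to solve for $t_0$ in closed form, since exhibiting $t$ as an admissible candidate already forces $t\le t_0$ by maximality. This keeps the argument self-contained, and it is the step I would check with the most care.
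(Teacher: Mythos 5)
Your two auxiliary computations are correct as far as they go: $\gcd(mt-1,m)=1$ is exactly the intended one-line argument for $d=1$, and exhibiting $t=\frac{n+1}{m}$ as an admissible candidate in the definition of $t_0$ (via $mt=1\cdot n+1$, so $\alpha=1$, together with $t\le\frac{n+1}{2}\le n-1$) does force $t\le t_0$ by maximality. (You also silently corrected the misprint in the remark: since $n=mt-1$, the threshold of Theorem \ref{t5}(1) is $\frac{n+1}{m}$, not $\frac{n-1}{m}$.) But your proposal stops one step short of what your own ``delicate point'' reveals, and that step overturns the remark rather than proving it. For $d=1$ the defining condition $mt_0\equiv 1\pmod n$ with $1\le t_0\le n-1$ pins $t_0$ down \emph{uniquely} --- the paper itself records this at the start of the proof of Lemma \ref{lemmy}(1) ($\widehat{t_0}=\widehat{m}^{-1}$ in $\mathbb Z/n\mathbb Z$, ``hence $t_0$ and $\alpha$ are uniquely defined''). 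So your admissibility computation yields not merely $t\le t_0$ but $t_0=t$: in the regime $n=mt-1$ the two thresholds coincide, and $\{s: s\ge t\}=\{s: s\ge t_0\}$.

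Consequently the strict comparison asserted in the remark cannot be proved, because it is false as printed. One has $t_0=n-1$ only when $m(n-1)\equiv 1\pmod n$, i.e. $m=n-1$, which under $n=mt-1$ with $t\ge 2$ forces $n\le 3$; already for $n=5$, $m=2$, $t=3$ one computes $t_0=3\ne 4=n-1$, and Theorem \ref{t2}(1) gives vanishing for all $s\ge 3$ --- exactly the range of Theorem \ref{t5}(1). Your final identification of the non-strict containment with ``the asserted comparison of thresholds'' is therefore the gap: the remark claims strict dominance with explicit (incorrect) values, while the honest output of your own method, pushed through the uniqueness you chose not to use, is that in case (1) Theorem \ref{t5}(1) has the same threshold as Theorem \ref{t2}(1); its added value lies in the more elementary route through Lemma \ref{lemoasa}(1), not in a smaller exponent bound.
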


\subsection*{Aknowledgments} 

We gratefully acknowledge the use of the computer algebra system Cocoa (\cite{cocoa}) for our experiments.

Mircea Cimpoea\c s was supported by a grant of the Ministry of Research, Innovation and Digitization, CNCS - UEFISCDI, 
project number PN-III-P1-1.1-TE-2021-1633, within PNCDI III.





\end{document}